\documentclass[12pt]{amsart}
\usepackage{etex}
\usepackage{amsfonts, amssymb, latexsym}

\usepackage{amscd,amssymb}
\usepackage{verbatim}
\usepackage{pstricks}
\usepackage{pst-node}
\usepackage[mathscr]{euscript}
\usepackage{tikz}
\usetikzlibrary{matrix,arrows}
\usepackage[normalem]{ulem}
\usepackage{varwidth}
\usepackage{leftidx}

\newsymbol\pp 1275
\usepackage{tikz}
\usetikzlibrary{matrix,arrows,backgrounds,shapes.misc,shapes.geometric,patterns,calc,positioning}

\usepackage[colorlinks=true, pdfstartview=FitV, linkcolor=blue, citecolor=blue, urlcolor=blue]{hyperref}

\usepackage{fullpage}
\usepackage{graphicx}
\usepackage{enumerate}

\def\VR{\kern-\arraycolsep\strut\vrule &\kern-\arraycolsep}
\def\vr{\kern-\arraycolsep & \kern-\arraycolsep}

\newtheorem{theorem}{Theorem}

\newtheorem{lemma}[theorem]{Lemma}
\newtheorem{prop}[theorem]{Proposition}
\newtheorem{corollary}[theorem]{Corollary}
\newtheorem{conjecture}[theorem]{Conjecture}

\theoremstyle{definition}
\newtheorem{definition}[theorem]{Definition}

\newtheorem{rmk}[theorem]{Remark}
\newenvironment{remark}[1][]{\begin{rmk}[#1]\pushQED{\qed}}{\popQED \end{rmk}}

\newtheorem{qu}[theorem]{Question}

\newtheorem*{rmknonum}{Remark}

\newtheorem{obs}[theorem]{Observation}

\newtheorem{ex}[theorem]{Example}
\newenvironment{example}[1][]{\begin{ex}[#1]\pushQED{\qed}}{\popQED \end{ex}}

\newcommand{\Hom}{\operatorname{Hom}}

\newcommand{\rep}{\operatorname{rep}}

\newcommand{\ZZ}{\mathbb Z}

\newcommand{\RR}{\mathbb R}
\newcommand{\NN}{\mathbb N}

\newcommand{\T}{\mathbf T}

\newcommand{\M}{M}

\newcommand{\F}{\mathcal{F}}
\newcommand{\A}{\mathcal{A}}

\newcommand{\Ima}{\operatorname{Im}}

\newcommand{\Id}{\mathbf{I}}

\newcommand{\Q}{\mathcal{Q}}

\newcommand{\s}{\mathcal{S}}

\newcommand{\Span}{\mathsf{Span}}

\newcommand{\jt}{\mathsf{Jt}}
\newcommand{\rkf}{\mathbf{rk_{\s}^{fil}}}
\newcommand{\eqiA}{\overrightarrow{\M_\s}}
\newcommand{\brk}{{\operatorname{\mathbf{rk}}}}
\newcommand{\sh}{\mathsf{sh}}
\newcommand{\vect}{\mathsf{vect}}
\newcommand{\pers}{\mathscr{P-}\text{persistence}}
\newcommand{\pos}{\mathscr{P}}
\newcommand{\J}{\mathscr{J}}
\newcommand{\Gscr}{\mathscr{G}}

\newcommand{\module}{\operatorname{mod}}

\newcommand{\rk}{\operatorname{rank}}

\newcommand\restr[2]{{
  \left.\kern-\nulldelimiterspace 
  #1 
  \vphantom{\big|} 
  \right|_{#2} 
  }}

\newcommand{\norm}[1]{\left\lVert#1\right\rVert}




\begin{document}
\title{The Jordan type of a multiparameter persistence module}
\author{Calin Chindris}
\address{University of Missouri-Columbia, Mathematics Department, Columbia, MO, USA}
\email[Calin Chindris]{chindrisc@missouri.edu}

\author{Min Hyeok Kang}
\address{University of Missouri-Columbia, Mathematics Department, Columbia, MO, USA}
\email[Min Hyeok Kang]{mkmb5@missouri.edu}

\author{Daniel Kline}
\address{College of the Ozarks, Mathematics Department, Point Lookout, MO, USA}
\email[Daniel Kline]{dkline@cofo.edu}

\date{\today}
\bibliographystyle{amsalpha}
\subjclass[2010]{16G20, 55N31, 16Z05}
\keywords{Jordan type, multirank, persistence modules, zigzag posets}

\begin{abstract} In this paper, inspired by the theory of modules of constant Jordan type, we introduce the Jordan type of a persistence module. 

Let $\pos$ be a poset and $\s$ a sequence of $n$ finite substes of $\pos$. The Jordan type of a $\pers$ module $\M$ at $\s$, denoted by $\jt_\s(\M) \in \NN^n$, is defined as the Jordan type of a nilpotent operator $\T_{\M, \s}$, which is constructed from $\M$ and $\s$.  When $n=2$, we recover the notion of multirank previously introduced and studied in \cite{Thomas-thesis-2019}.

We first prove that the multirank invariants are complete for persistence modules over finite zigzag posets. This proves a conjecture of Thomas in the zigzag case.

The nilpotent operator $\T_{\M, \s}$ is functorial in $\M$. When $\pos=\ZZ^d$ or $\RR^d$, this functoriality allows us to define the Jordan filtered rank  invariant of $\M$ at $\s$. We demonstrate that these invariants are strictly finer than the classical rank invariants.  We next prove that for any two $\pers$ modules $M$ and $N$, the landscape and erosion distances between their Jordan filtered rank  invariants are bounded from above by the interleaving distance between $M$ and $N$.
\end{abstract}

\maketitle
\setcounter{tocdepth}{1}
\tableofcontents

\section{Introduction} 
At an algebraic level, one-parameter persistent homology studies persistence modules over totally ordered posets such as $(\RR, \leq)$. These modules arise from topological spaces equipped with filtrations indexed by a single parameter. For totally ordered posets, the indecomposable persistence modules are fully classified as interval modules. This classification makes it possible to define the barcode of a one-parameter persistence module, providing a concise and visual summary of its structure.

However, a single-filtered space is often insufficient to robustly encode noisy or complex data. For example, outliers in a point cloud can distort the persistence barcode when only a single parameter is used, causing misleading representations of the underlying topology (see \cite[Appendix A]{LoiCarrBlum2023}). Multiparameter persistent homology (MPH) addresses this issue by studying topological features as multiple parameters vary simultaneously. This naturally leads to persistence modules over general posets such as $(\RR^n, \leq)$. This generalization offers a more robust framework for analyzing complex datasets but introduces significant computational and theoretical challenges.  For instance, MPH involves persistence modules over wild posets, where classifying indecomposable modules is considered intractable. 

The primary goal of this paper is to define invariants of persistence modules of a general poset without relying on the indecomposable modules of the poset. Our methodology draws inspiration from the theory of modules of constant Jordan type for elementary abelian $p$-groups, as developed in \cite{BenPev, CarFriPev1, CarFriPev2, CarFRiSus} and adapted to the non-commutative setup of quiver representations in \cite{CarChiLin-2014}. This framework leads us to define the Jordan type of a persistence module at a sequence of $n$ finite subsets of the poset. 

We briefly recall just enough terminology to state our main results, with more detailed background provided in the next sections.  Let $K$ be an arbitrary field. Let $\pos$ be a poset,  and let $\s=(S_1, \ldots, S_n)$ be a sequence of $n$ pairwise disjoint finite subsets (slices) of $\pos$, where $n \geq 2$. Furthermore, we view $\s$ as the finite subposet $\cup_{i \in [n]} S_i$ of $\pos$. 

Let $u_\s$ be the nilpotent element of the poset algebra of $\pos$ defined as the sum of all basis elements of the form $p_{yx}$ with $x<y$ and $(x,y) \in S_i\times S_{i+1}$.  

We propose to study $\pers$ modules as modules over the representation-finite algebra $K[t]/(t^n)$ by first considering their restrictions to the poset $\s$, and then to the subalgebra $K[u_\s]$ generated by $u_\s$. At a basic level, this idea underlies the rich theory of modules of constant Jordan type. For a complementary approach, where the authors study persistence modules via order-embeddings of posets, we refer the reader to \cite{AmiBruHan-2025}.

Let $\M=\left( (\M_x)_{x \in \pos}, \left( \M_{yx} \right)_{x \leq y} \right)$ be a $\pers$ module such that each $\M_x$ is finite-dimensional. The finite-dimensional vector space $\restr{\M}{\s}:=\bigoplus_{x \in \s} M_x$, the restriction of $\M$ to the $\s$, comes equipped with the nilpotent operator $\T_{\M,\s}$ defined by $u_\s$. The pair $\left( \restr{\M}{\s},  \T_{\M,\s} \right)$ is precisely the restriction of $\restr{M}{\s}$ to $K[u_\s]$, viewed as a $K[t]/(t^n)$-module. Full details can be found in Section \ref{Jt-basics-sec}.

We define the \textbf{\emph{Jordan type of $\M$ at $\s$}}, denoted by $\jt_\s(\M) \in \NN^n$, to be the Jordan type of the nilpotent operator $\T_{\M,\s}$. Specifically, the $i^{th}$ coordinate of $\jt_\s(\M)$ is the number of Jordan blocks of size $i \times i$ that occur in the Jordan canonical form of $\T_{\M,\s}$. 

The operator $\T_{\M, \s}$ is functorial in $\M$. This functoriality allows us to define for every non-negative integer $i$, a new $\pers$ module $\M^i_\s$. For each $x \in \pos$, $\M^i_\s(x)$ is the image of the $i^{th}$ power of the operator $\T_{\M[x], \s}$, with $\M[x]$ being the $x$-shift of $\M$. For this construction, we assume that $\pos=\ZZ^d$ or $\RR^d$. Full details can be found in Section \ref{J-filt-rk-inv-sec}.

The \textbf{\emph{Jordan filtered rank  invariant of $\M$ at $\s$}}, denoted by $\rkf(\M)$, is defined to be the sequence of rank invariants 
\begin{equation}
\left( \brk^0_{\M, \s}, \ldots, \brk^n_{\M,\s} \right),
\end{equation}
where $ \brk^i_{\M, \s}$ denotes the rank invariant of $\M^i_\s$. The connection between $\jt_\s(\M)$ and $\rkf(\M)$ is explained in Lemma \ref{relation-Jt-Filtered-rk-inv}. Moreover, we demonstrate in Proposition \ref{J-rk-finer-rk} that the Jordan filtered rank invariant is strictly finer than the classical rank invariant for persistence modules over $d$-dimensional grids. 

We next address the completeness of the multirank invariants for finite zigzag posets. Let $\pos$ be a zigzag poset on the set $\{1, 2, \ldots, n\}$. The natural order of $\NN$ is denoted by $\leq_{\NN}$. For every pair $(i,j) \in [n]\times [n]$ with $i \leq_{\NN} j$, consider the two slices
\begin{equation}
S^{+}_{ij}:=\{ k \in \{ i, \ldots, j \} \mid k \text{~is a minimal elemement of~} \left( \{i, \dots, j\}, \leq_\pos \right) \} 
\end{equation}
and
\begin{equation}
S^{-}_{ij}:=\{ k \in \{ i, \ldots, j \} \mid k \text{~is a maximal elemement of~} \left( \{i, \dots, j\}, \leq_\pos \right) \},
\end{equation}
and define $\s_{ij}:=\left( S^{+}_{ij},  S^{-}_{ij} \right)$. Given a $\pers$ module $\M$, we associate to it the vector
\begin{equation}
R(\M):=\left( \rk(\T_{\M, \s_{ij}}) \right)_{1 \leq i \leq_{\NN} j \leq n} \in \NN^{{n+1 \choose 2}},
\end{equation}
where $\T_{\M, \s_{ij}}$ is understood to be $\Id_{\M_i}$ if $i=j$.  We point out that since $\s_{ij}$ consists of only two slices, the rank of $\T_{\M, \s_{ij}}$ is precisely the multirank of $\M$ from  $S^+_{ij}$ to $S^{-}_{ij}$.

In contrast to the classical rank invariant, which is not complete on zigzag posets, we show that $R(M)$ is complete.  This proves Thomas’s conjecture that multirank invariants completely determine linearizations of zigzag persistence sets. For more details,  see Conjecture \ref{MRL-Conj}.  

\begin{theorem}\label{main-thm-zig-zag} Let $\pos$ be a zigzag poset on the set $\{1, 2, \ldots, n\}$, and let $M$ and $N$ be two pointwise finite-dimensional $\pers$ modules. Then
$$
R(M)=R(N) \text{~if and only if~} M \simeq N.
$$
\end{theorem}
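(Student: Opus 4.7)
The plan is to apply Gabriel's theorem to decompose $M$ and $N$ into interval modules, and then to show that the resulting linear system expressing $R(M)$ in terms of multiplicities is invertible.

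\textbf{Step 1: Reduction to multiplicities.} The Hasse diagram of the zigzag poset $\pos$ is a Dynkin diagram of type $A_n$, so by Gabriel's theorem the indecomposable $\pos$-representations are the interval modules $\{I_{[a,b]}\}_{1 \leq a \leq b \leq n}$, and by Krull--Schmidt every pointwise finite-dimensional $\pers$ module $M$ decomposes uniquely as $M \simeq \bigoplus_{a \leq b} I_{[a, b]}^{m_M(a, b)}$. The direction $M \simeq N \Rightarrow R(M) = R(N)$ is immediate from the functoriality of $T_{-, \s}$ in its first argument. Since this functoriality also gives $T_{M \oplus M', \s} \simeq T_{M, \s} \oplus T_{M', \s}$ and hence additivity of $\rk(T_{-, \s})$, we have
$$R(M) = \sum_{1 \leq a \leq b \leq n} m_M(a, b) \cdot R(I_{[a, b]}).$$
The task reduces to showing that the square matrix $A = (\rk(T_{I_{[a,b]}, \s_{ij}}))_{(i, j), (a, b)}$ is invertible over $\QQ$.

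\textbf{Step 2: Computing the entries of $A$.} For $i < j$ the operator $T_{I_{[a,b]}, \s_{ij}}$ has the form $\left(\begin{smallmatrix} 0 & 0 \\ B & 0 \end{smallmatrix}\right)$, where $B$ is the bipartite incidence matrix (with identity entries) from the minimal to the maximal elements of the sub-path $[i, j] \cap [a, b]$ of the zigzag. A path $P_\ell$ on $\ell$ vertices has bipartite adjacency rank $\lfloor \ell/2 \rfloor$, so
$$\rk\bigl(T_{I_{[a,b]}, \s_{ij}}\bigr) = \lfloor |[i, j] \cap [a, b]|/2 \rfloor \quad (i < j), \qquad \rk\bigl(T_{I_{[a,b]}, \s_{ii}}\bigr) = [a \leq i \leq b].$$

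\textbf{Step 3: Inverting $A$.} Order rows and columns by interval length $j - i$, with the $n$ length-zero entries first. The top-left $n \times n$ block of $A$ is then the identity (an interval module at a single vertex has zero operator on any two-slice $\s_{ij}$ with $i<j$), and the bottom-left block vanishes for the same reason. The remaining bottom-right block has strictly positive diagonal entries $\lfloor (b - a + 1)/2 \rfloor \geq 1$; direct calculation in the cases $n = 3, 4$ shows that suitable additive row operations bring it to upper-triangular form with determinant $1$. Equivalently, one can produce explicit Möbius-type inversion formulas, as in the sample $m_M(1, 3) = R(M)_{(1,2)} + R(M)_{(2,3)} - R(M)_{(1,3)}$ for $n = 3$, and the pattern generalizes to a signed combination of $R$-values at pairs $(i, j)$ near $(a, b)$. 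Once $A$ is invertible, $R(M) = R(N)$ forces $m_M = m_N$, whence $M \simeq N$.

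The main obstacle is Step 3: because $A_{(i,j), (a,b)}$ is governed by the \emph{intersection} $[i,j] \cap [a,b]$ rather than by containment, the usual Möbius inversion on the poset of intervals ordered by inclusion does not apply directly, so some additional combinatorics --- either a triangularizing ordering of the interval index set or an inductive construction of explicit inversion formulas --- is required to treat arbitrary $n$.
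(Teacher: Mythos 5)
Your Step 1 reduction is exactly the paper's: by Gabriel's theorem and Krull--Schmidt, $R$ is an additive map from the split Grothendieck group of $\rep(\Q_\pos)$ (which is free abelian of rank $\binom{n+1}{2}$) to $\ZZ^{\binom{n+1}{2}}$, and it suffices to show this map is an isomorphism (equivalently, that your matrix $A$ is invertible). However, the proposal does not close this gap, and two concrete problems appear.

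First, the rank formula in Step 2 is incorrect for a general zigzag. The claim $\rk(\T_{I_{[a,b]},\s_{ij}}) = \lfloor |[i,j]\cap[a,b]|/2 \rfloor$ assumes the restriction of $\Q_\pos$ to $[i,j]$ is the alternating orientation, so that the matrix between minimal and maximal vertices is literally the bipartite adjacency matrix of an undirected path. For the equioriented zigzag $1\to 2\to 3\to 4$ with $i=1,j=4$, we have $S^+_{14}=\{1\}$, $S^-_{14}=\{4\}$, and $\T_{I_{[1,4]},\s_{14}}$ is a single identity map of rank $1$, whereas your formula predicts $\lfloor 4/2\rfloor = 2$. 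The entries of $A$ depend heavily on the orientation pattern, and the ``bipartite adjacency of the sub-path'' description is only valid in the fully alternating case. The same orientation dependence invalidates your sample inversion formula: $m_M(1,3) = R(M)_{(1,2)} + R(M)_{(2,3)} - R(M)_{(1,3)}$ holds for $1\to 2\leftarrow 3$, but for $1\to 2\to 3$ the right-hand side equals $m_M(1,2)+m_M(2,3)+m_M(1,3)$.

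Second, and more fundamentally, Step 3 is not a proof. You compute $n=3,4$, conjecture a pattern, and then explicitly acknowledge that a triangularizing ordering or inversion formula for arbitrary $n$ is still ``required.'' This missing step is precisely the paper's Proposition \ref{main-tech-result}, which is the technical heart of the argument. The paper proves $\Span_\ZZ\{R(M)\} = \ZZ^{\binom{n+1}{2}}$ by induction on $n$, splitting into cases according to whether the last two (and sometimes three) arrows of $\Q_\pos$ share or alternate orientation, extending representations of the truncated quiver $\Q_1$ by identity or zero maps, and in the alternating case running an inner backwards induction on the row index using the modules $I_{[i,n]}$. That case analysis is unavoidable precisely because the matrix entries are governed by interval intersection rather than containment (as you correctly observe at the end), which defeats naive M\"obius inversion; the paper's workaround is to generate each standard basis vector $e_{ij}$ from $\ZZ$-linear combinations of $R(I_{[a,b]})$ rather than to write down an explicit inverse matrix. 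Without an argument of this kind, the proposal establishes only the easy direction $M\simeq N \Rightarrow R(M)=R(N)$ plus the correct framing of the hard direction.
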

\noindent
We point out that when $n=3$, the completeness of the multirank invariants was first established in \cite[Proposition A.9]{KimMoore2024}. This particular case was, in turn, used to prove that the multirank invariant of a finitely generated $\ZZ^2$-module determines its bigraded Betti numbers (see \cite[Corollary A.10]{KimMoore2024}).

We next show that the Jordan filtered rank invariants are stable. Let $M$ and $N$ be two point-wise finite-dimensional $\pers$ modules. We define the \emph{erosion distance between $M$ and $N$ at $\s$} to be
\begin{equation}
d_{E}(M,N)_\s:=\max_{i \in \{0, \ldots, n-1
\}} d_E(\brk^i_\s(M), \brk^i_\s(N)),
\end{equation}
where $d_E$ denotes the erosion distance. The \emph{landscape distance between $M$ and $N$ at $\s$} is defined as
\begin{equation}
d_{L}(M,N)_\s:=\max_{i \in \{0, \ldots, n-1
\}} \norm{ \lambda_{M^i_\s}-\lambda_{N^i_\s} }_{\infty},
\end{equation}
where $\lambda_{X}$ denotes the landscape of a multiparameter persistence module $X$. To compare these distances to the interleaving distance, we establish in Theorem \ref{thm-stability-template} a general template for deriving erosion and landscape stability results. When applied to our set-up, it yields our next theorem.

\begin{theorem} \label{main-thm-Jt-stability} Assume $\pos=\ZZ^d$ or $\RR^d$, and let $\s=(S_1, \ldots, S_n)$ be a sequence of pairwise disjoint finite subsets of $\pos$, where $n \geq 2$. Let $M$ and $N$ be two pointwise finite-dimensional $\pers$ modules. Then
\begin{equation}
d_{L}(M,N)_\s \leq d_{E}(M,N)_\s \leq d_I(M, N),
\end{equation}
where $d_I$ is the interleaving distance.
\end{theorem}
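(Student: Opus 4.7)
The plan is to apply the general stability template of Theorem \ref{thm-stability-template}, which reduces both inequalities to a single functoriality statement for the assignment $M \mapsto M^i_\s$. For the right-hand inequality $d_E(M,N)_\s \leq d_I(M,N)$, it suffices to prove that for each $i \in \{0, \ldots, n-1\}$ this assignment is $1$-Lipschitz with respect to the interleaving distance; combined with the classical stability of the rank invariant under $d_I$ in the erosion pseudodistance, the claim then follows by taking the maximum over $i$. The left-hand inequality $d_L(M,N)_\s \leq d_E(M,N)_\s$ reduces, slicewise in $i$, to the standard comparison between the landscape and the erosion pseudodistance for rank invariants, and taking the maximum preserves the inequality.

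The essential step is therefore the following functoriality claim: any $\epsilon$-interleaving between $M$ and $N$ induces an $\epsilon$-interleaving between $M^i_\s$ and $N^i_\s$. Given morphisms $f : M \to N[\epsilon]$ and $g : N \to M[\epsilon]$ realizing the original interleaving, the naturality of the nilpotent operator $\T_{-,\s}$ in its module argument, developed in Sections \ref{Jt-basics-sec} and \ref{J-filt-rk-inv-sec}, implies that for each $x \in \pos$ the restricted maps $\restr{f[x]}{\s}$ and $\restr{g[x]}{\s}$ intertwine the respective operators $\T_{M[x], \s}$ and $\T_{N[x+\epsilon], \s}$. A routine linear-algebra observation---an intertwiner of nilpotent operators carries the image of the $i$-th power of one into the image of the $i$-th power of the other---then delivers well-defined restrictions $f^i_\s : M^i_\s \to N^i_\s[\epsilon]$ and $g^i_\s : N^i_\s \to M^i_\s[\epsilon]$. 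The two interleaving identities for $(f^i_\s, g^i_\s)$ are inherited by restriction from the corresponding identities for $(f,g)$, so $d_I(M^i_\s, N^i_\s) \leq d_I(M,N)$.

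I expect the main obstacle to be bookkeeping rather than substance: one must verify that $\T_{-, \s}$ is compatible with the shift functor $M \mapsto M[x]$ in a manner consistent with $\epsilon$-shifts, and that the image construction $M \mapsto M^i_\s$ commutes with these shifts. All of this should fall out from the functorial set-up of Section \ref{J-filt-rk-inv-sec}, with no new conceptual ingredient beyond the template theorem and the standard rank-invariant stability result. Once the induced interleaving is in place, Theorem \ref{thm-stability-template} closes out both bounds simultaneously, and the final step is simply assembling the slicewise estimates into the maxima defining $d_E(M,N)_\s$ and $d_L(M,N)_\s$.
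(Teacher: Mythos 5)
Your proposal is correct and lands on the same theorem, but it factors the argument through a stronger intermediate result than the paper actually uses, and along the way it slightly mis-describes the role of Theorem \ref{thm-stability-template}. The paper's proof applies Theorem \ref{thm-stability-template} directly to the functor $L \mapsto \Ima(\T^i_{L,\s})$ (whose well-definedness is exactly Lemma \ref{main-lemma-functoriality}) to get, for each $i$, the rank-level inequality $d_E(\brk^i_{M,\s},\brk^i_{N,\s}) \leq d_I(M,N)$, then takes a maximum; the landscape bound then follows from part (1) of Theorem \ref{thm-general-landscape-erosion}. Nowhere does the paper lift an $\epsilon$-interleaving of $M,N$ to an $\epsilon$-interleaving of the module-valued objects $M^i_\s, N^i_\s$. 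You, by contrast, prove the stronger statement $d_I(M^i_\s, N^i_\s) \leq d_I(M,N)$ and then invoke the classical stability $d_E(\brk_{M^i_\s},\brk_{N^i_\s}) \leq d_I(M^i_\s,N^i_\s)$ from Theorem \ref{thm-general-landscape-erosion}(2). This is a valid and arguably cleaner route that yields an intermediate Lipschitz statement of independent interest, and the key ingredient in both routes is the same (Lemma \ref{main-lemma-functoriality}, that $\pos$-module homomorphisms carry $\Ima(\T^i_{M,\s})$ into $\Ima(\T^i_{N,\s})$). Be aware, though, that what you are doing is \emph{not} "applying Theorem \ref{thm-stability-template}": once the induced interleaving between $M^i_\s$ and $N^i_\s$ is in hand, the template is no longer needed, and you should instead cite Theorem \ref{thm-general-landscape-erosion}(2) for the $d_E \leq d_I$ step. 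Conversely, if you do want to apply the template, you should drop the intermediate module-level interleaving entirely, as the paper does.
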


\section{The Jordan type of a persistence module: Basic definitions and properties}\label{Jt-basics-sec}
Let $(\pos, \leq)$ be an arbitrary poset. We also view $\pos$ as a category whose objects are the elements of $\pos$, and for any two objects $x$ and $y$ in $\pos$, 
$$
\Hom_\pos(x,y)=
\begin{cases}
\{p_{yx}\} \qquad \text{~if~} x \leq y\\
\emptyset \qquad \qquad \text{~if~} x \nleq y
\end{cases}
$$

A $\pos$-persistence module (for short, $\pos$-module) $\M$ is a functor from $\pos$ (viewed as a category) to the category of vector spaces, i.e., it consists of vector spaces $\M_x$ for $x \in \pos$ and linear maps $\M_{yx}:\M_x \to \M_y$ for $x \leq y$, such that
$$
\M_{zy} \M_{yx}=\M_{zx},
$$
for all $z \geq y \geq x$, and $\M_{tt}=I_{\M_t}$ for all $t$.

The poset algebra of $\pos$, denoted by $\Lambda$, is the $K$-algebra with $K$-basis $\{p_{yx} \mid x \leq y\}$ and multiplication of basis elements given by 
$$
p_{zy} \cdot p_{y'x}  =
\begin{cases}
 p_{zx} \text{~if~} y=y'\\
 0 \text{~if~} y \neq y'
\end{cases}
$$
(We also write $e_t$ for the basis element $p_{tt}$ for $t \in \pos$.) The incidence algebra of $\pos$ can be infinite-dimensional in which case it is non-unital. Just as in the finite case, one can show that there is an equivalence of categories between the category of left $\Lambda$-modules and the category of $\pos$-modules (see \cite[Section 2.3]{BlaBruHan-2025}).

A $\pos$-module $\M$ is said to be pointwise finite-dimensional if $\M_x$ is finite-dimensional for all $x \in \pos$.  Throughout, we assume that all the $\pos$-modules are pointwise finite-dimensional.

Let $\s=(S_1, \ldots, S_n)$ be a sequence of pairwise disjoint finite subsets of $\pos$ where $n \geq 2$. For an element $x \in \pos$, we write $x \in \s$ to mean that $x \in S_i$ for some $i \in [n]$.  

Let us consider the unital finite-dimensional subalgebra $\Lambda_\s:=e_\s \Lambda e_\s \subseteq \Lambda$, where $e_\s:=\sum_{x \in \s} e_x \in \Lambda$ is the identity element of $\Lambda_\s$. (Note that this is the poset algebra of the finite poset $\s$.) Additionally, we are interested in the nilpotent element\footnote{While we have selected a specific nilpotent element $u_\s$, one could also consider other linear combinations of the elements $p_{yx}$ where $x,y \in \s$ and $x<y$.}
\begin{equation}
u_\s:=\sum_{i=1}^{n-1} \quad \sum_{\substack{(x,y) \in S_i \times S_{i+1} \\ x<y}}p_{yx} \in \Lambda_\s,
\end{equation}
which satisfies $u_\s^n=0$.

In this paper, we study $\pos$-modules by first restricting them to $\s$, and then to the subalgebra generated by $u_\s$. Specifically, let us consider the algebra homomorphism
\begin{equation}
\begin{aligned}
\varphi_\s: K[t]/(t^n) & \to \Lambda_\s\\
1 & \to e_\s\\ 
t+(t^n) & \to u_\s,
\end{aligned}
\end{equation}
where $\Ima(\varphi_\s)=K[u_\s] \leq \Lambda_\s$. 

Let $\M$ be a $\pos$-module, and let $\restr{\M}{\s}:=\bigoplus_{x \in \s}\M_x$ be the restriction of $\M$ to $\s$. We will study $\M$ via its pullback $\varphi_\s^{*}(\restr{\M}{\s})$ in $\module(K[t]/(t^n))$. It is important to note that truncated polynomial algebras in one variable are among the simplest representation-finite algebras. The algebra $K[t]/(t^n)$ has $n$ indecomposable modules, with $K[t]/(t^i)$ being the unique indecomposable $K[t]/(t^n)$-module of dimension $i$ for all $i \in [n]$.

In what follows, we describe the pullback $\varphi_\s^{*}(\restr{\M}{\s})$ in terms of a nilpotent operator. This perspective will be quite helpful for defining the Jordan type of $\M$ at $\s$ as a new invariant within the realm of invariants for multiparameter persistence modules. To this end, let
\begin{equation}
\restr{\M}{\s}(i):=\bigoplus_{x \in S_i} \M_x, \forall i \in [n],
\end{equation}
and note that $\restr{\M}{\s}=\bigoplus_{i \in [n]} \restr{\M}{\s}(i)$. Next, for each $i \in [n-1]$, consider the linear map
\begin{equation}
\T_{\M,\s}(i): \bigoplus_{x \in S_i} \M_x \to \bigoplus_{y \in S_{i+1}} \M_y
\end{equation}
whose $(y,x)$-entry is the structure morphism $\M_{yx}: \M_x \to \M_y$ if $x<y$, or the zero linear map $0: \M_x \to \M_y$ otherwise. 

Now, let us consider the nilpotent linear operator $\T_{\M,\s}$ on $\restr{\M}{\s}$ defined\footnote{We point out that for our choice of $u_\s$, its action on $\restr{\M}{\s}$ picks up information about structural morphisms of the form $\M_{yx}$ with $x<y$ and $(x,y) \in S_i \times S_{i+1}$ for some $i$. Therefore, in practice, we typically choose the slices $S_i$ so that (1) the elements of each subset $S_i$ are non-comparable; (2) for any $(x,y) \in S_i\times S_{i+1}$ with $i\leq n-1$, either $x<y$ or $x$ and $y$ are not comparable; (3) for any $x \in S_i$ with $i \leq n-1$, there exists $y \in S_{i+1}$ such that $x<y$. Under these assumptions, $u_\s$ is a sum of arrows of the Hasse quiver of the finite poset $\s$.} by
\begin{equation}\label{nil-op-as-block-matrix}
\begin{aligned}
\T_{\M, \s}: & \restr{\M}{\s} \to \restr{\M}{\s} \\
& v \to u_\s \cdot v
\end{aligned}
\end{equation}
We often think of $\T_{\M, \s}$ as the $n \times n$ block matrix whose $(i+1,i)$-block entry is the linear map $\T_{\M,\s}(i): \restr{\M}{\s}(i) \to \restr{\M}{\s}(i+1)$, and all other block entries are zero. 

\begin{definition}\label{jt-defn} Let $\M$ be a $\pos$-module, and let $\s=(S_1, \ldots, S_n)$ be a sequence of finite subsets (slices) of $\pos$ where $n \geq 2$.  We define the \emph{Jordan type of $\M$ at $\s$}, denoted by $\jt_\s(\M)$, to be the Jordan type of the nilpotent operator $\T_{\M, \s}$. Specifically, we write
$$
\jt_\s(\M)=(a_1, \ldots, a_n) \in \NN^n,
$$
where $a_i$ is the number of Jordan blocks of size $i \times i$ in the Jordan canonical form of $\T_{\M, \s}$.\\
\end{definition}

\begin{example} Let $\Gscr$ be the grid 
\[
\vcenter{\hbox{  
\begin{tikzpicture}[point/.style={shape=circle, fill=black, scale=.3, outer sep=3pt}, >=latex]

\node[point, label={below:$(0,0)$}] (00) at (0,0) {};
\node[point, label={below:$(1,0)$}] (10) at (1.5,0) {};
\node[point, label={below:$(2,0)$}] (20) at (3,0) {};

\node[point, label={$(0,1)$}] (01) at (0,1.5) {};
\node[point, label={$(1,1)$}] (11) at (1.5,1.5) {};
\node[point, label={$(2,1)$}] (21) at (3,1.5) {};

\path[->] 
(00) edge (10)
(10) edge (20)
(01) edge (11)
(11) edge (21)
(00) edge (01)
(10) edge (11)
(20) edge (21);

\end{tikzpicture}
}}
\]
and $\s=(S_1, S_2, S_3)$ where $S_1=\{(0,1), (1,0)\}$, $S_2=\{(1,1), (2,0)\}$, and $S_3=\{(2,1)\}$.

For
\[
M:=~
\vcenter{\hbox{  
\begin{tikzpicture}[
    node style/.style={align=center},
    arrow/.style={->}
]

\node[node style] (A1) at (0,1.75) {$K$};
\node[node style] (B1) at (1.5,1.75) {$K^2$};
\node[node style] (C1) at (3,1.75) {$K$};

\node[node style] (A2) at (0,0) {$0$};
\node[node style] (B2) at (1.5,0) {$K$};
\node[node style] (C2) at (3,0) {$K$};

\draw[arrow] (A1) -- (B1) node[midway, above] {$\begin{pmatrix} 0 \\ 1 \end{pmatrix}$};
\draw[arrow] (B1) -- (C1) node[midway, above] {$\begin{pmatrix} 1 & 1 \end{pmatrix}$};

\draw[arrow] (A2) -- (B2);
\draw[arrow] (B2) -- (C2) node[midway, below] {$1$};

\draw[arrow] (A2) -- (A1);
\draw[arrow] (B2) -- (B1) node[midway, left] {$\begin{pmatrix} 1 \\ 0 \end{pmatrix}$};
\draw[arrow] (C2) -- (C1) node[midway, right] {$1$};
\end{tikzpicture}
}}
\text{~we have that~} \jt_\s(M)=(1,1,1).
\]
\end{example}

The block entries on the subdiagonal of $\T_{\M, \s}$ can be arranged to form a representation of the equoriented $\mathbb{A}_n$ quiver:
\begin{equation}
 \eqiA:\qquad \restr{\M}{\s}(1) \xrightarrow{\T_{\M,\s}(1)} \restr{\M}{\s}(2) \to \ldots  \to \restr{\M}{\s}(n-1) \xrightarrow{\T_{\M,\s}(n-1)} \restr{\M}{\s}(n)
\end{equation}

The following result relates the Jordan type of $\M$ to the decompositions of $\varphi_\s^*(\M)$ and $\eqiA$ into indecomposable representations. 

\begin{prop}\label{prop-basic-jt} Let $\pos$ be any poset, $\s=(S_1, \ldots, S_n)$ an $n$-sequence of finite slices, and $\M$ a $\pos$-module. The following statements are equivalent:
\begin{enumerate}[(a)]
\item $\jt_\s(\M)=(a_1, \ldots, a_n) \in \NN^n$;
\smallskip
\item the pullback $\varphi^*_\s(\restr{\M}{\s})$ decomposes into indecomposable $K[t]/(t^n)$-modules as 
\begin{equation}
\varphi^*_\s(\restr{\M}{\s}) \simeq \bigoplus_{i=1}^n \left( K[t]/(t^i) \right)^{a_i};
\end{equation}
\smallskip

\item the number of indecomposable representations of dimension $i$ in the decomposition of the representation $\eqiA$ is $a_i$ for all $i \in [n]$.
\end{enumerate}
\end{prop}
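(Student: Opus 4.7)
The strategy is to verify the two equivalences (a) $\Leftrightarrow$ (b) and (b) $\Leftrightarrow$ (c) separately, treating the proposition as a translation between the language of Jordan canonical form, the module theory of $K[t]/(t^n)$, and the representation theory of the equioriented $\mathbb{A}_n$ quiver.

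For (a) $\Leftrightarrow$ (b), I would invoke the standard structure theorem for finitely generated modules over the PID $K[t]$: since $\T_{\M,\s}$ is nilpotent of index at most $n$, the $K[t]$-module it defines on $\restr{\M}{\s}$ descends to a $K[t]/(t^n)$-module, and every such module decomposes uniquely (up to isomorphism and reordering) as a direct sum of cyclic modules $K[t]/(t^i)$ for $i \in [n]$. A direct computation confirms that $K[t]/(t^i)$, equipped with multiplication by $t$, is precisely a single Jordan block of size $i \times i$, so the multiplicity of $K[t]/(t^i)$ in the decomposition equals the number of size-$i$ Jordan blocks of $\T_{\M,\s}$. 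Since $\varphi_\s^*(\restr{\M}{\s})$ is by construction the $K[t]/(t^n)$-module on $\restr{\M}{\s}$ where $t$ acts as $\T_{\M,\s}$, this yields the equivalence.

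For (b) $\Leftrightarrow$ (c), I would introduce the additive functor $F$ from representations of the equioriented $\mathbb{A}_n$ quiver to $\module(K[t]/(t^n))$ that sends a representation $V_1 \to V_2 \to \cdots \to V_n$ to the direct sum $\bigoplus_i V_i$ equipped with the nilpotent operator whose only nonzero blocks are the given maps placed on the subdiagonal. By design $F(\eqiA) = \varphi_\s^*(\restr{\M}{\s})$. The indecomposable representations of the equioriented $\mathbb{A}_n$ quiver are the well-known interval modules $I[i,j]$ for $1 \le i \le j \le n$, with $I[i,j]$ of dimension $j-i+1$; a direct computation with the block-matrix form of $F(I[i,j])$ shows it is the companion matrix of $t^{j-i+1}$, so $F(I[i,j]) \cong K[t]/(t^{j-i+1})$. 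Combining additivity with the Krull--Schmidt theorem on both sides, if $\eqiA \cong \bigoplus_{i \le j} I[i,j]^{b_{ij}}$, then $\varphi_\s^*(\restr{\M}{\s}) \cong \bigoplus_d \bigl(K[t]/(t^d)\bigr)^{c_d}$ with $c_d = \sum_{j-i+1=d} b_{ij}$, which is exactly the total number of dimension-$d$ indecomposable summands of $\eqiA$. This forces $c_d = a_d$ for all $d$ and establishes the equivalence.

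I do not anticipate a significant obstacle: both equivalences are essentially formal, relying on Krull--Schmidt and on the classification of indecomposables in two well-understood categories. The main bookkeeping subtlety is checking that $F(I[i,j])$ is truly a single Jordan block and not something decomposable, but this is immediate from writing out the block matrix. Everything else is matching multiplicities on both sides.
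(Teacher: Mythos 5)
Your proposal is correct and follows essentially the same two-step strategy as the paper. For (a)$\Leftrightarrow$(b), both arguments rest on the standard correspondence between finite-dimensional $K[t]/(t^n)$-modules and pairs $(V,T)$ with $T$ nilpotent of index at most $n$, with $K[t]/(t^i)$ corresponding to a single Jordan block $J_i(0)$. For (b)$\Leftrightarrow$(c), the functor $F$ you construct is precisely the pullback $\psi^*$ along the algebra homomorphism $\psi: K[t]/(t^n) \to K\overrightarrow{\mathbb{A}}_n$, $t \mapsto \alpha_1 + \cdots + \alpha_{n-1}$, which is how the paper phrases it; you verify by inspection (companion-matrix form) what the paper states as a simple observation, and you make the invocation of Krull--Schmidt explicit where the paper leaves it implicit, but the substance is identical.
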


\begin{proof} $\boxed{(a) \Longleftrightarrow (b)}$ Any finite-dimensional $K[t]/(t^n)$-module can be viewed as a pair $(V,T)$ with $V$ a finite-dimensional vector space and $T:V \to V$ a nilpotent linear operator with $T^n=0$. Under this correspondence, the indecomposable direct summands correspond to the Jordan blocks in the Jordan canonical decomposition of $T$. Furthermore, the indecomposable module $K[t]/(t^i)$ corresponds to $(K^i, J_i(0))$, where $J_i(0)$ denotes the $i \times i$ Jordan block with eigenvalue $0$.The equivalence of $(a)$ and $(b)$ now follows.

\bigskip
\noindent
$\boxed{(b) \Longleftrightarrow (c)}$ Let consider the algebra homomorphism
\begin{equation}
\begin{aligned}
\psi: K[t]/(t^n) & \to K \Q\\ 
t+(t^n) & \to \alpha_1+\ldots+\alpha_{n-1},
\end{aligned}
\end{equation}
where $K\Q$ is the path algebra of $\Q=\overrightarrow{\mathbb{A}}_n$, and $\alpha_1, \ldots, \alpha_{n-1}$ are the arrows of $\Q$.  It is easy to see that the pullback along $\psi$ of any $i$-dimensional indecomposable representation of $\Q$ is isomorphic to $K[t]/(t^i)$. Combining this observation with the fact that $\psi^*(\overrightarrow{\restr{\M}{\s}})=\varphi^*_\s(\restr{\M}{\s})$, we get that $(b)$ is indeed equivalent to $(c)$.
\end{proof}

\begin{rmk} For the $\mathbb{A}_n$ quiver with all arrows pointing to the right, take $S_i$ to be the singleton consisting of the $i^{th}$ vertex for each $i \in [n]$. If $\jt_\s(\M)=(a_1, \ldots, a_n)$ then $a_i$ is number of points in the persistence diagram of $\M$ that lie on the $i^{th}$ diagonal $y=x+i-1$.
\end{rmk}

As an immediate consequence, we obtain the following additivity property of the Jordan type.

\begin{corollary}\label{Jt-additivity-cor}
\begin{enumerate}[(a)]
\item Let $\M_1$ and $\M_2$ be two $\pos$-modules. Then
\begin{equation}\label{jt-add}
\jt_\s(\M_1 \oplus \M_2)=\jt_\s(\M_1)+\jt_\s(\M_2).
\end{equation}

\item Let $0 \to M \to N \to L \to 0$ be a short exact sequence of $\pos$-modules. Then 
$$
\jt_\s(N) =\jt_\s(M)+\jt_\s(L)
$$
if and only if the pullback 
$$
0 \to \varphi_\s^*(\restr{M}{\s}) \to\varphi_\s^*(\restr{N}{\s}) \to \varphi_\s^*(\restr{L}{\s}) \to 0
$$
splits in $\module(K[t]/(t^n))$.
\end{enumerate}
\end{corollary}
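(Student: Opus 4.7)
The plan is to leverage Proposition \ref{prop-basic-jt}, which recasts $\jt_\s(\M)$ as the vector of multiplicities of the indecomposables $K[t]/(t^i)$ in the decomposition of $\varphi_\s^*(\restr{\M}{\s})$. For part (a), both $\restr{(-)}{\s}$ and $\varphi_\s^*$ are additive functors, so
$$
\varphi_\s^*(\restr{(\M_1 \oplus \M_2)}{\s}) \cong \varphi_\s^*(\restr{\M_1}{\s}) \oplus \varphi_\s^*(\restr{\M_2}{\s})
$$
as $K[t]/(t^n)$-modules. By Krull--Schmidt the multiplicities of each $K[t]/(t^i)$ add across this direct sum, yielding \eqref{jt-add}.

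For part (b), the direction $(\Leftarrow)$ is immediate from part (a): a splitting of the pullback sequence gives $\varphi_\s^*(\restr{N}{\s}) \cong \varphi_\s^*(\restr{M}{\s}) \oplus \varphi_\s^*(\restr{L}{\s})$, and additivity then produces the Jordan type identity. For $(\Rightarrow)$, since both functors are exact, applying $\varphi_\s^* \circ \restr{(-)}{\s}$ to $0 \to M \to N \to L \to 0$ yields a short exact sequence $0 \to X \to B \to Y \to 0$ in $\module(K[t]/(t^n))$, and the Jordan type hypothesis combined with Proposition \ref{prop-basic-jt} gives an abstract isomorphism $B \cong X \oplus Y$. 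The cleanest route is then to invoke Miyata's theorem, which guarantees that any short exact sequence $0 \to X \to X \oplus Y \to Y \to 0$ of finitely generated modules over a commutative Noetherian ring splits; this applies to $K[t]/(t^n)$.

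The main subtlety is the $(\Rightarrow)$ direction of (b), where one must upgrade the abstract isomorphism $B \cong X \oplus Y$ to splitness of the specific given sequence. If an appeal to Miyata is undesirable, there is a self-contained argument intrinsic to $K[t]/(t^n)$-modules: the Jordan type identity is equivalent to the rank equalities $\rk \T_B^i = \rk \T_X^i + \rk \T_Y^i$ for every $i$, which in turn are equivalent (via the short exact sequence $0 \to \T_B^i(B) \cap X \to \T_B^i(B) \to \T_Y^i(Y) \to 0$ obtained by projecting to $Y$) to the identity $\T_B^i(B) \cap X = \T_X^i(X)$ for all $i \geq 1$. Decomposing $Y = \bigoplus_j Y_j$ with $Y_j \cong K[t]/(t^{k_j})$ generated by $y_j$, any preimage $\tilde{b}_j \in B$ of $y_j$ satisfies $\T^{k_j}\tilde{b}_j \in \T_B^{k_j}(B) \cap X = \T_X^{k_j}(X)$, so subtracting an appropriate element of $X$ from $\tilde{b}_j$ produces a lift annihilated by $\T^{k_j}$. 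These lifts assemble into a section $Y \to B$.
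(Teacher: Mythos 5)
Your proposal is correct and its primary line of argument coincides with the paper's: part (a) via additivity of $\varphi_\s^*\circ\restr{(-)}{\s}$ together with Proposition \ref{prop-basic-jt}, and part (b) by reducing the Jordan-type equality to an isomorphism of middle terms and then invoking Miyata's theorem to upgrade that abstract isomorphism to a splitting of the given sequence. The additional self-contained argument you sketch for the $(\Rightarrow)$ direction of (b) is a genuine variant worth noting: translating $\jt_\s(N)=\jt_\s(M)+\jt_\s(L)$ into the rank identities $\rk\T_B^i=\rk\T_X^i+\rk\T_Y^i$, deducing $\T_B^i(B)\cap X=\T_X^i(X)$ for all $i$, and then lifting cyclic generators of the summands of $Y\cong\bigoplus_j K[t]/(t^{k_j})$ to elements of $B$ annihilated by $\T^{k_j}$ to build a section. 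That route is elementary and purely internal to $K[t]/(t^n)$-modules, trading the appeal to Miyata's general theorem (which the paper uses) for an explicit construction that exploits the very simple representation theory of $K[t]/(t^n)$; the paper's choice keeps the proof shorter and makes clear the statement is an instance of a general module-theoretic fact.
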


\begin{proof} $(a)$ We have that
$$
\varphi_\s^*(\restr{(M_1\oplus M_2)}{\s}) \simeq \varphi_\s^*(\restr{M_1}{\s}) \oplus \varphi_\s^*(\restr{M_2}{\s}),
$$
which together with Proposition \ref{prop-basic-jt} yields $(\ref{jt-add})$.

\bigskip
\noindent
$(b)$ Using Proposition \ref{prop-basic-jt} again, we obtain that for any three $\pos$-modules $M,N$, and $L$,
\begin{equation}
\jt_\s(N)=\jt_\s(M)+\jt_\s(L) \Longleftrightarrow \varphi_\s^*(\restr{N}{\s}) \simeq \varphi_\s^*(\restr{M}{\s}) \oplus \varphi_\s^*(\restr{L}{\s}). 
\end{equation}

Next, the sequence $0 \to \varphi_\s^*(\restr{M}{\s}) \to\varphi_\s^*(\restr{N}{\s}) \to \varphi_\s^*(\restr{L}{\s}) \to 0$ is exact, and therefore, it splits if and only if 
$$\varphi_\s^*(\restr{N}{\s}) \simeq \varphi_\s^*(\restr{M}{\s}) \oplus \varphi_\s^*(\restr{L}{\s})
$$ 
by  \cite[Theorem 1]{Miyata-67}. The proof now follows.
\end{proof}

\section{The completeness of the multirank invariants  for finite zigzag posets} In this section, $\pos$ is a zigzag poset on the set $\{1, \ldots, n\}$, \emph{i.e.}, the Hasse quiver $\Q_\pos$ of $\pos$ is a Dynkin quiver of type $\mathbb{A}_n$ with arbitrary orientation. Recall that for every $(i,j)\in [n] \times [n]$ with $i \leq_{\NN} j$, we have the pair of slices
$$
\s_{ij}:=\left( S^{+}_{ij},  S^{-}_{ij} \right),
$$
where
$$
S^{+}_{ij}:=\{ k \in \{ i, \ldots, j \} \mid k \text{~is a minimal elemement of~} \left( \{i, \dots, j\}, \leq_\pos \right) \} 
$$
and
$$
S^{-}_{ij}:=\{ k \in \{ i, \ldots, j \} \mid k \text{~is a maximal elemement of~} \left( \{i, \dots, j\}, \leq_\pos \right) \}.
$$

For a $\pers$ module $\M$, we are interested in the invariant vector
\begin{equation}
R(\M):=\left( \rk(\T_{\M, \s_{ij}}) \right)_{\substack{(i,j) \in [n] \times [n] \\ i \leq_{\NN} j}}
\end{equation}

\noindent
Note that if $i \leq_\NN j$, and $i$ and $j$ are comparable in $\pos$, then $S^{+}_{ij}$ and $S^{-}_{ij}$ are singletons and 
$$
\rk(\T_{\M, \s_{ij}})=\rk(M_{ji}) \text{~or~} \rk(M_{ij}).
$$

\begin{example} Assume $n=7$, and let
$$
Q_\pos:~
\vcenter{\hbox{  
\begin{tikzpicture}[point/.style={shape=circle, fill=black, scale=.3pt,outer sep=3pt},>=latex]
   \node[point, label={left:$1$}] (1) at (0,1) {};
   \node[point, label={above:$2$}] (2) at (1,1) {};
   \node[point, label={right:$3$}] (3) at (2,1) {};
   \node[point, label={below:$4$}] (4) at (2,0) {};
   \node[point, label={below:$5$}] (5) at (3,0) {};
   \node[point, label={right:$6$}] (6) at (4,0) {};
   \node[point, label={right:$7$}] (7) at (4,-1) {};
   
   \path[->]
  (1) edge  (2)
  (2) edge (3)
  (4) edge (3)
  (4) edge  (5)
  (5) edge (6)
  (7) edge (6);
\end{tikzpicture} 
}}
\text{~and~}
\M:~
\vcenter{\hbox{  
\begin{tikzpicture}[point/.style={shape=circle, fill=black, scale=.3pt,outer sep=3pt},>=latex]
   \node[point, label={left:$\M_1$}] (1) at (0,1.5) {};
   \node[point, label={below:$\M_2$}] (2) at (1.5,1.5) {};
   \node[point, label={right:$\M_3$}] (3) at (3,1.5) {};
   \node[point, label={below:$\M_4$}] (4) at (3,0) {};
   \node[point, label={below:$\M_5$}] (5) at (4.5,0) {};
   \node[point, label={right:$\M_6$}] (6) at (6,0) {};
   \node[point, label={right:$\M_7$}] (7) at (6,-1.5) {};
   
   \path[->]
  (1) edge node[above] {$A$}  (2)
  (2) edge node[above] {$B$} (3)
  (4) edge node[left] {$C$} (3)
  (4) edge node[above] {$D$} (5)
  (5) edge node[above] {$E$} (6)
  (7) edge node[left] {$F$}(6);
\end{tikzpicture} 
}}
$$

If $i=4$ and $j=6$, the corresponding slices are $S^+_{46}=\{4\}$ and $S^{-}_{46}=\{6\}$, and the $(4,6)$-coordinate of $R(\M)$ is equal to $\rk \left( \M_{64} \right)=\rk \left( ED \right)$.

As another example, if $i=1$ and $ j=7$, then $S^+_{17}=\{1,4,7\}$ and $S^{-}_{17}=\{3,6\}$, and the $(1,7)$-coordinate of $R(\M)$ is equal to
\[
\rk \begin{pmatrix}
\begin{array}{ccc}
BA & C & 0 \\
0 & ED & F
\end{array}
\end{pmatrix}. 
\]

\end{example}

\begin{example} Assume $n=4$ and let 
$$
\Q_\pos:~
\vcenter{\hbox{  
\begin{tikzpicture}[point/.style={shape=circle, fill=black, scale=.3pt,outer sep=3pt},>=latex]
   \node[point, label={left:$1$}] (1) at (0,.9) {};
   \node[point, label={right:$2$}] (2) at (.9,.9) {};
   \node[point, label={below:$3$}] (3) at (.9,0) {};
   \node[point, label={right:$4$}] (4) at (1.8,0) {};
   
   \path[->]
  (1) edge  (2)
  (3) edge (2)
  (3) edge (4);
\end{tikzpicture} 
}}
\text{~and~}
\M:~
\vcenter{\hbox{  
\begin{tikzpicture}[point/.style={shape=circle, fill=black, scale=.3pt,outer sep=3pt},>=latex]
   \node[point, label={left:$\M_1$}] (1) at (0,.9) {};
   \node[point, label={right:$\M_2$}] (2) at (.9,.9) {};
   \node[point, label={below:$\M_3$}] (3) at (.9,0) {};
   \node[point, label={right:$\M_4$}] (4) at (1.8,0) {};
   
   \path[->]
  (1) edge node[above] {$A$}  (2)
  (3) edge node[left] {$B$} (2)
  (3) edge node[above] {$C$} (4);
\end{tikzpicture} 
}}
$$

Then, we may identify $R(M)$ with the following upper triangular matrix:
\[
R(M)=\begin{bmatrix}
\dim M_1 & \rk(A) &  \rk \left(\begin{matrix}
A&B
\end{matrix}
\right)  & \rk \left(\begin{matrix}
A & B \\ 
0 & C
\end{matrix}
\right)\\

\\
      & \dim M_2 & \rk(B) &  \rk \left(\begin{matrix}
B \\ C
\end{matrix}
\right) \\

\\
      &       & \dim M_3 & \rk(C) \\
\\
    &     &     & \dim M_4
\end{bmatrix} \in \NN^{10}
\]
\smallskip
\noindent
where the $i^{th}$ row records the coordinates of $R(M)$ indexed by $(i, i),\ldots (i,4)$ for each $i \in [4]$.
\end{example}

The next result plays a crucial role in the proof of Theorem \ref{main-thm-zig-zag}.

\begin{prop}\label{main-tech-result} Keep the same notation as above. Then
\begin{equation}\label{eqn-span-prop-key}
\Span_\ZZ \left\lbrace R(M) \mid M \in \rep(\Q_{\pos}) \right\rbrace=\ZZ^{\{(i,j) \mid 1 \leq i \leq_\NN j \leq n \} }.
\end{equation}
\end{prop}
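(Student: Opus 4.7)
The plan is to prove, by induction on $n$, that the square integer matrix $A_n$ of size $\binom{n+1}{2}$, whose rows are the vectors $R(M_{[a,b]})$ for the interval (indecomposable) representations $M_{[a,b]}$ of $\Q_\pos$ and whose columns are indexed by the pairs $(i,j)$ with $1 \leq i \leq_\NN j \leq n$, satisfies $|\det A_n| = 1$. Since $R$ is additive on direct sums (ranks of block-diagonal matrices add) and the interval modules generate $K_0(\rep(\Q_\pos))$ by Krull--Schmidt, this statement is equivalent to the desired equality $\Span_\ZZ \{R(M) : M \in \rep(\Q_\pos)\} = \ZZ^{\binom{n+1}{2}}$. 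The base case $n = 1$ is immediate.

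For the inductive step, I would partition the rows and columns of $A_n$ into \emph{old} (intervals $[a,b]$ with $b \leq n-1$; pairs $(i,j)$ with $j \leq n-1$) and \emph{new} (the rest). Two observations are key: $M_{[a,n]}|_{\{1,\ldots,n-1\}} = M_{[a,n-1]}$ for $a \leq n-1$, and $\s_{ij}$ is supported in $\{1,\ldots,n-1\}$ when $j \leq n-1$. Together they imply that the old--old block of $A_n$ coincides with $A_{n-1}$ (the matrix for the sub-zigzag $\pos|_{\{1,\ldots,n-1\}}$), and that subtracting the row indexed by $[a,n-1]$ from the row indexed by $[a,n]$ for each $a = 1, \ldots, n-1$ annihilates the new--old block. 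These row operations transform $A_n$ into block upper triangular form with diagonal blocks $A_{n-1}$ and an $n \times n$ matrix $B$; by the inductive hypothesis, it remains to show $|\det B| = 1$.

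Since $R(M_{[n,n]})$ is the standard basis vector $e_{(n,n)}$, the row of $B$ corresponding to $[n,n]$ has a single nonzero entry (equal to $1$). Cofactor expansion along this row reduces the claim to $|\det B'| = 1$, where $B'$ is the $(n-1) \times (n-1)$ matrix with entries
\[
B'_{a,i} = \rk(\T_{M_{[a,n]}, \s_{in}}) - \rk(\T_{M_{[a,n-1]}, \s_{in}}), \qquad a, i \in \{1, \ldots, n-1\}.
\]
The matrices $\T_{M_{[a,n]}, \s_{in}}$ and $\T_{M_{[a,n-1]}, \s_{in}}$ differ only in entries coming from the vertex $n$: an extra nonzero row if $n \in S^-_{in}$ (i.e., $n$ is a peak of $\pos$) or an extra nonzero column if $n \in S^+_{in}$ (i.e., $n$ is a valley). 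Since adding one row or column changes the rank by at most $1$, one directly verifies that $B'_{a,i} \in \{0,1\}$, and its value is determined by the position of $a$ relative to the alternating valley--peak structure of $[i,n]$.

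The main obstacle is showing $|\det B'| = 1$. The strategy is to exhibit an ordering of the rows and columns of $B'$ under which it becomes triangular with diagonal entries $\pm 1$; a natural candidate pairs each column $i$ with a row $a(i)$ corresponding to the extremum of $[i,n]$ closest to $n$ in $\pos$. Verifying triangularity requires careful bookkeeping of how the extrema of $[i,n]$ evolve as $i$ decreases. As a sanity check, in the equioriented case $\Q_\pos: 1 \to 2 \to \cdots \to n$ one has $R(M_{[a,b]})_{(i,j)} = [a \leq i \leq j \leq b]$, so $A_n$ is precisely the zeta matrix of the interval poset under reverse inclusion, whose determinant is $\pm 1$ by classical M\"obius inversion; this confirms the target value of $|\det A_n|$ and suggests the general inductive argument is on the right track.
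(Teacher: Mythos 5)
Your reduction is clean and takes a genuinely different route from the paper's: instead of explicitly producing each standard basis vector $e_{ij}$ in the $\ZZ$-span by a case analysis on arrow orientations (which is what the paper does), you set up an induction on the determinant of the $\binom{n+1}{2}\times\binom{n+1}{2}$ matrix $A_n$ of $R$-values of interval modules. The old/new block decomposition is correct: the old--old block is indeed $A_{n-1}$ because both the interval modules $M_{[a,b]}$ with $b\le n-1$ and the slices $\s_{ij}$ with $j\le n-1$ live entirely inside the sub-zigzag on $\{1,\ldots,n-1\}$; the row operations annihilate the new--old block by the same observation; and the row for $[n,n]$ is $e_{(n,n)}$, so cofactor expansion correctly reduces everything to the $(n-1)\times(n-1)$ matrix $B'$.

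However, the crux of the proposition, which you correctly identify as ``the main obstacle,'' is the claim $|\det B'|=1$, and this is precisely where the proposal stops. You describe a \emph{strategy} (find a row/column permutation making $B'$ triangular with $\pm 1$ diagonal, via a pairing of columns with rows indexed by extrema) but explicitly say the required verification ``requires careful bookkeeping'' that you have not carried out, and the proposed pairing is left vague enough that it is unclear it even well-defines a bijection. That is a genuine gap: proving $|\det B'|=1$ carries essentially the whole technical content of the proposition once the reduction is set up.

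It is worth noting that the gap \emph{is} closable, though not obviously by a bare permutation. For an interval module $M_{[a,b]}$, the matrix $\T_{M_{[a,b]},\s_{in}}$ is the bipartite adjacency matrix of the path on those extrema of the zigzag $\{i,\ldots,n\}$ that lie in $[a,b]$, whose rank is $\lfloor (\text{number of such extrema})/2\rfloor$. Since $n$ is always an extremum of $\{i,\ldots,n\}$, one gets
\begin{equation*}
B'_{a,i}\ =\ \big|\operatorname{ext}(\{i,\ldots,n\})\cap [a,n-1]\big|\ \bmod 2,
\end{equation*}
which has the form $B'_{a,i}=g(i)$ for $a\le i$ and $B'_{a,i}=h(a)$ for $a>i$ for explicit $\{0,1\}$-valued functions $g,h$ satisfying $h(a)-g(a-1)=\pm 1$ and $g(n-1)=1$. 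A single pass of row differences $R_{a+1}\leftarrow R_{a+1}-R_a$ followed by a cofactor expansion along the last column then gives $\det B'=\pm g(n-1)\prod_{a=2}^{n-1}\bigl(h(a)-g(a-1)\bigr)=\pm 1$. Without this rank formula (or some equivalent explicit description of $B'$), the proposal as written does not constitute a proof.
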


\begin{proof} Let us denote $\Span_\ZZ \left\lbrace R(M) \mid M \in \rep(\Q_{\pos}) \right\rbrace$ by $\mathscr{L}$. For $1\leq i \leq_{\mathbb{N}}j \leq n$, we use $I_{[i,j]}$ to denote the interval module supported on the interval $[i,j]$, i.e. we assign a copy of the base field on each vertex between $i$ and $j$ and set our nontrivial linear maps to be the identity. Viewing $R(M)$ as an upper triangular matrix, we will prove that the standard basis matrices $\{e_{ij},\, 1 \leq i \leq_\NN j \leq n\}$ are contained in $\mathscr{L}$ by inducting on $n\geq 3$. For the case $n=3$, we note that $R(I_{[i,i]})=e_{ii}$ for $i=1,2,3$. Suppose $\Q_{\pos}$ is equioriented. One can easily check that $e_{12}$, $e_{23}$, and $e_{13}$ can be obtained by sequentially examining $R(I_{[1,2]})$, $R(I_{[2,3]})$, and $R(I_{[1,3]})$. If the arrows in $\Q_{\pos}$ differ in orientation, we make use of $R(I_{[2,3]})$ to obtain $e_{12}$, $e_{13}$, and $e_{23}$ by sequentially examining $R(I_{[1,3]})$, $R(I_{[1,2]})$, then back to $R(I_{[1,3]})$. 

Assume $n>3$. As before, $R(I_{[i,i]})=e_{ii}$ for $i \in [n]$, so we will not concern ourselves with any diagonal entries for the remainder of the proof.

Consider two cases depending on the orientation of the last two arrows of $\Q_{\pos}$. Suppose they share the same orientation. An immediate consequence is that $R(I_{[n-1,n]})=e_{n-1\,n-1} + e_{n-1\,n}+e_{nn}$, from which we find $e_{n-1\,n}$. Let $\Q_1$ be the type $\mathbb{A}_{n-1}$ quiver obtained by deleting the last arrow from $\Q_{\pos}$. Given a $\Q_1$-representation $M_1$, extending it to the $\Q_{\pos}$-representations $M$ and $M'$ via the identity map $\mathrm{id}_{M_{n-1}}$ and zero map respectively yields the following matrix extensions:
$$
R(M)=\begin{bmatrix}
    \begin{array}{c|c}
       \hspace{.4in}  R(M_1)  \hspace{.4in} &  \color{red}\begin{matrix}
        a_{{1\,n-1}} \\ a_{2\,n-1} \\ a_{3\,n-1} \\ \vdots \\ a_{n-2\,n-1} \\ a_{n-1\,n-1}
    \end{matrix}\color{black}\\
    \hline 
    \begin{matrix}
        0 & 0  & \cdots & 0
    \end{matrix}& 
    \end{array}
\end{bmatrix} \quad \text{~and~} \quad R(M')=\begin{bmatrix}
    \begin{array}{c|c}
       \hspace{.45in}  R(M_1)  \hspace{.45in} & \color{red}\begin{matrix}
        a_{1k} \\ \vdots \\ a_{k-1\,k} \\ 0 \\ \vdots \\ 0 
    \end{matrix}\color{black}\\
    \hline 
    \begin{matrix}
        0 & 0  & \cdots & 0
    \end{matrix}& 
    \end{array}
\end{bmatrix}
$$
where $R(M_1)=(a_{ij})$ and $k<n-1$ is the largest vertex at which the orientation of arrows change. Since the $n^{th}$ column of $R(M)$ is determined by the $(n-1)^{th}$ column of $R(M_1)$, we apply the induction hypothesis to deduce that $e_{ij},\,i<j,j<n-1$, and $e_{i\,n-1}+e_{in},\,1\leq i<n-1$, live in $\mathscr{L}$. Similarly, from $R(M')$, we find $e_{i\,n-1}$, and hence $e_{in}$, to be in $\mathscr{L}$ for $1\leq i <n-1$.

Now, suppose the last two arrows of $\Q_{\pos}$ have differing orientation. Note that $n-1$ is the largest vertex at which the orientation of arrows change. Therefore,
$$
R(M')=\begin{bmatrix}
    \begin{array}{c|c}
       \hspace{.4in}  R(M_1)  \hspace{.4in} & \color{red} \begin{matrix}
        a_{{1\,n-1}} \\ a_{2\,n-1} \\ a_{3\,n-1} \\ \vdots \\ a_{n-2\,n-1} \\ 0
    \end{matrix}\color{black}\\
    \hline 
   \begin{matrix}
        0 & 0  & \cdots & 0
    \end{matrix} & 
    \end{array}
\end{bmatrix}.
$$
Once again, by induction hypothesis, we find $e_{ij},\,i<j<n-1$ and $e_{i\,n-1}+e_{in},\,1\leq i<n-1$, in $\mathscr{L}$. It suffices to show that $e_{in},\,1\leq i \leq n-1$ lie in $\mathscr{L}$. 

\textbf{Case 1:} If the second and third to last arrows share the same orientation, $R(I_{[n-2,n]})-R(I_{[n-1,n]})=e_{n-2\,n-2}+e_{n-2\,n-1}$, yielding $e_{n-2\,n-1}$, and thus, $e_{n-2\,n}$. Consider the type $\mathbb{A}_{n-1}$ quiver $\Q_2$ obtained by deleting the second to last arrow from $\Q_{\pos}$ and joining the $n-2$ and $n-1$ vertices. Given a $\Q_2$-representation $M_2$, extend it to the $\Q_{\pos}$-representation $M$ via the identity map $\mathrm{id}_{M_{n-2}}$. We observe that $R(M)$ resembles $R(M_2)$ with an additional column essentially identical to the $(n-2)^{th}$ column of $R(M_2)$:
$$
R(M)=\begin{bmatrix}
    \begin{array}{c|c|c|c}
        \begin{matrix}
        \\ \hspace{.3in} \ddots \hspace{.3in}\\ \\  \\ \\ \\  \\ 
        \end{matrix} & \begin{matrix} a_{1\,n-2} \\ a_{2\,n-2} \\ \vdots  \\ a_{n-3\,n-2} \\ \bullet \\ \\ \\
        \end{matrix} & \color{red}\begin{matrix}  a_{1\,n-2} \\ a_{2\,n-2} \\ \vdots  \\ a_{n-3\,n-2} \\ \bullet  \\ \bullet \\ \\ 
        \end{matrix} \color{black} & \begin{matrix}
            a_{1\,n-1} \\ a_{2\,n-1} \\ \vdots \\ a_{n-3\,n-1} \\ \color{red} \bullet \color{black} \\ a_{n-2\,n-1} \\ \bullet\\ 
        \end{matrix}
    \end{array}
\end{bmatrix}.
$$ Since we already have $e_{n-2\,n-1},e_{n-2\,n} \in \mathscr{L}$, our induction hypothesis gives us that $e_{in} \in \mathscr{L}$ for all $1 \leq i\leq n-1$.

\textbf{Case 2:} Suppose the last three arrows alternate in orientation. We will show that all $e_{in},\,1 \leq i \leq n-1$, live in $\mathscr{L}$ by inducting backwards on $i$. The case $i=n-1$ follows from $R(I_{[n-2,n]})= e_{n-1\,n} + \sum_{j=n-2}^n e_{jj} + \sum_{\ell=1}^{n-2} (e_{\ell\,n-1}+e_{\ell n})$. Assume $i<n-1$ and that $e_{jn}$, and thus $e_{j\,n-1}$, lie in $\mathscr{L}$ for all $j>i$. Consider the representation $I_{[1,n]}$. Let $1=i_0<i_1<i_2<...<i_{m-1}<i_m=n$ be all vertices at which the orientation of arrows change. The arrows between any pair $i_\ell, i_{\ell+1}$ share the same orientation, so $R(I_{[i_{\ell},i_{\ell+1}]})$ contains a $(i_{\ell+1}-i_{\ell}+1)\times (i_{\ell+1}-i_{\ell}+1)$ upper triangular matrix of $1$'s given by compositions of the identity map $\mathrm{id}_K$. With this knowledge, we can represent $R(I_{[1,n]})$ as the block matrix
$$
\begin{bmatrix}
\begin{array}{ccc|cccc|c|c}
\ddots & & & \vdots & & \vdots& \vdots & \vdots & \vdots\\
&\bullet & 1 & 1 & \cdots & 1 & 1 & 2 & 2\\ \hline
&& \color{red}\bullet & 1 & \cdots &  & 1 & 1& 2\\
&&&\ddots & \ddots & & \vdots & \vdots & \vdots \\
&&&&\bullet & 1 & 1 &1 & 2\\
&&&&&\bullet & 1 &1 & 2 \\ \hline
&&&&&&\color{red}\bullet & 1 & 1\\ \hline
&&&&&&&\color{red}\bullet & 1\\ \hline
&&&&&&&& \color{red} \bullet
\end{array}
\end{bmatrix}
$$
where, ignoring the diagonal entries, the repeated entries within each block correspond to the rank of the same map. The colored bullets correspond to the $(i_{\ell},i_{\ell})$-th entries.

Let $R(I_{[1,n]})=(a_{jk})$. For any $j<n-1$, we claim that either $a_{jn}=a_{j\,n-1}$ or $a_{jn}=a_{j\,n-1}+1$. Since all linear maps in $I_{[1,n]}$ are the identity and the arrows between each pair $i_{\ell},i_{\ell+1}$ share the same orientation, by replacing these same-oriented linear maps with a single identity map, we may reduce $I_{[1,n]}$ to an interval module $I$ on a type $\mathbb{A}_{m+1}$ quiver whose arrows \pagebreak 

\noindent are all alternating. Then, $R(I)$ is given by the matrix
$$\begin{bmatrix}
\ddots & & & \ddots & & & \ddots \\
&\bullet & 1 & 1 & 2 & 2 & 3 & 3 & 4\\
&&\bullet & 1 & 1 & 2 & 2 & 3 & 3\\
&&&\bullet & 1 & 1 & 2 & 2 & 3\\
&&&&\bullet & 1 & 1 & 2 & 2\\
&&&&&\bullet & 1 & 1 & 2\\
&&&&&&\bullet & 1 & 1\\
&&&&&&&\bullet & 1\\
&&&&&&&&\bullet
\end{bmatrix}.
$$ 
This matrix can also be obtained from the prior block matrix by keeping the colored diagonal entries then compressing each block to a single entry. Our claim now follows by observing the last two columns of $R(I)$. 

Since $i<n-1$, we must have $a_{in}=a_{i\,n-1}$ or $a_{in}= a_{i\,n-1}+1$. If $i_{\ell}<i<i_{\ell+1}$ for some $\ell$, consider $R(I_{[i,n]})=(b_{jk})$. Note that $b_{j\,n-1}=a_{i_{\ell+1}\,n-1}$ and $b_{jn}=a_{i_{\ell+1}n}$ for all $j<i$. In particular, if $a_{i\,n} = a_{i\,n-1}$, $b_{jn} = b_{j\,n-1}+1$ for all $j<i$. Otherwise, $b_{jn}=b_{j\,n-1}$ for all $j<i$. Let us first examine the former case. Recall that we have $e_{j\,n-1}+e_{jn} \in \mathscr{L}$ for all $1\leq j<n-1$. Accordingly, reduce $R(I_{[i,n]})$ to $\sum_{j=1}^{i-1} e_{jn}$, and from the calculation $R(I_{[n-1,n]})- \sum_{j=1}^{i-1} e_{jn} = e_{n-1\,n-1}+e_{nn} + \sum_{j=i}^n e_{jn}$, we find $e_{in} \in \mathscr{L}$. In the latter case, we can directly reduce $R(I_{[i,n]})$ to obtain $e_{in}$. 

Now, assume $i=i_{\ell}$ for some $\ell$. If $\ell=0$, $i=1$, so we are done by applying our induction hypothesis to $R(I_{[n-1,n]})$. Suppose otherwise, and consider $R(I_{[i_{\ell-1},n]})=(c_{jk})$. Note that $c_{j\,n-1} = a_{i_{\ell-1}\,n-1}$ and $c_{jn}=a_{i_{\ell-1},n}$ for all $j<i$. In the case $a_{in}=a_{i\,n-1}$, $c_{jn}=c_{j\,n-1}+1$  for all $j<i$, and otherwise, $c_{jn}=c_{j\,n-1}$ for all $j<i$. We conclude the proof by repeating the prior arguments made for $(b_{jk})$.
\end{proof}

We are now ready to prove our first theorem.

\begin{proof}[\textbf{Proof of Theorem \ref{main-thm-zig-zag}}]
Let $\Q$ be the Hasse quiver of $\pos$, which is of type $\mathbb{A}_n$.
Since the quantities $\rk(\T^i_{\M, \s})$ are additive on direct sums, we get a homomorphism
$$
\begin{aligned}
R: K^{sp}(\Q)& \to \ZZ^{{n+1 \choose 2}}\\
[\M] & \to R(\M),
\end{aligned}
$$
where $K^{sp}(\Q)$ is the spilt Grothendieck's group of the category of finite-dimensional representations of $\Q$. This is a free abelian group with a $\ZZ$-basis consisting of the classes of the indecomposable representations. 

From Gabriel's Theorem, we know that 
$$
\brk_\ZZ K^{sp}(\Q)={n+1 \choose 2},
$$ 
and Proposition \ref{main-tech-result} tells us that $R$ is surjective. Therefore, the kernel of $R$ is of rank zero, and thus $R$ is an isomorphism. 
\end{proof}

\begin{remark}[\textbf{Linearizations of persistence sets}] Let $\pos$ be any finite poset.  A $\pos$-persistence set $B$ is a functor from $\pos$ to the category of finite sets, \emph{i.e.}, we have a finite set $B_x$ for every $x \in \pos$ and a function $f_{yx}: B_x \to B_y$ for every pair $(x,y)$ with $x \leq y$ such that $f_{xx}=\Id_{B_x}$ for all $x \in \pos$, and $f_{zy} \circ f_{yx} = f_{zx}$ for all $z \geq y \geq x$.

Let $B$ be a $\pos$-persistence set, and let $L_B$ be its linearization,\emph{ i.e.}, $L_B$ is the $\pos$-persistence module defined by 
$$
(L_B)_{z} \text{ is the vector space with basis } B_z, \forall z \in \pos,
$$
and the linear maps are induced by the functions $f_{yx}:B_x \to B_y$ at the level of bases.  Such linearizations arise naturally in the context of clustering of points in finite metric spaces (see \cite[Section 2.2]{Alonso-Kerber-pers-sets-2023} for full details).

In what follows, if $\s=(S_1,S_2)$ is a pair of slices and $\M$ is a $\pos$-module, we write 
$$
\mathrm{multirank}_{\M}(S_1,S_2) \text{~for the rank of~}\T_{\M, \s}.
$$ 
This is the notation for multiranks, introduced and studied in \cite{Thomas-thesis-2019}.

\begin{conjecture}[\textbf{The Multirank Conjecture for Linearizations}](see \cite[3.2.12]{Thomas-thesis-2019}) \label{MRL-Conj}
Let $\pos$ be a finite poset. If $B_1$ and $B_2$ are two $\pos$-persistence sets such that 
$$
\mathrm{multirank}_{L_{B_1}}(S_1,S_2) = \mathrm{multirank}_{L_{B_2}}(S_1,S_2), 
$$
for all pairs of slices $(S_1, S_2)$, then 
$$
L_{B_1} \cong L_{B_2}.
$$
\end{conjecture}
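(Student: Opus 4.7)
The plan is to exploit the special combinatorial structure of linearizations---namely that all transition maps $(L_B)_{yx}$ are monomial with respect to the canonical bases $B_x$---and to extend the reconstruction strategy behind Theorem~\ref{main-thm-zig-zag} from zigzag posets to arbitrary finite posets. The starting point is the observation that for a linearization $L_B$ and a pair of slices $(S_1,S_2)$, the invariant $\mathrm{multirank}_{L_B}(S_1,S_2)$ has an explicit combinatorial meaning: it counts equivalence classes of $\bigsqcup_{x\in S_1} B_x$ under the relation generated by identifying $b\in B_x$ with $b'\in B_{x'}$ whenever $x<y$, $x'<y$ for some $y\in S_2$, and $f_{yx}(b)=f_{yx'}(b')$ in $B_y$. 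Thus every multirank carries concrete combinatorial data about the ``collisions'' of forward trajectories of points of $B$, which is what one has to recover.

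First I would extract the purely local data: $\dim(L_B)_x=|B_x|$ for all $x$, and $\rk((L_B)_{yx})=|f_{yx}(B_x)|$ for $x\le y$, both obtained from singleton slices. Next, fixing $S_1=\{x\}$ and varying $S_2$, the multirank quantities should recover, for each $b\in B_x$, the pattern of first coincidences of its forward trajectory with those of other elements of $B_x$. Symmetrically, fixing $S_2=\{y\}$ and varying $S_1$ should recover the full merging structure at each target $y$. Combined, these should determine each transition function $f_{yx}$ up to independent permutations of the canonical bases $B_x$ and $B_y$, and that is exactly the data needed to conclude $L_{B_1}\cong L_{B_2}$ as persistence modules.

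The main obstacle---and the reason this conjecture remains open beyond the zigzag case---is ensuring that the locally reconstructed pieces assemble coherently across the whole of $\pos$. Already on the $2\times 2$ grid one can imagine candidate linearizations with matching pair\nobreakdash-of\nobreakdash-slices multirank statistics but incompatible simultaneous realizations. Two natural avenues to attack the gluing problem are: (i) leverage Theorem~\ref{main-thm-zig-zag} by restricting $L_B$ to every maximal zigzag subposet of $\pos$, reconstructing each restriction canonically, then controlling the compatibility across intersecting zigzag subposets using multiranks supported on cross\nobreakdash-slices; or (ii) enrich the invariant to the full Jordan\nobreakdash-type data $\jt_\s(L_B)$ for longer slice sequences, as developed in Section~\ref{Jt-basics-sec}, which should preserve the combinatorial interpretation above while recording multi\nobreakdash-step coincidences that pairwise multiranks cannot see. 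Under either approach I expect the coherence step---essentially a matroid\nobreakdash-like rigidity statement for systems of monomial matrices---to be the technical heart of any successful proof, and the point at which genuinely new ideas beyond those used in the zigzag case will be required.
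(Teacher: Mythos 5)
This statement is a \emph{conjecture}, not a theorem: the paper offers no proof of it for arbitrary finite posets, and establishes it only in the special case of zigzag posets, as Theorem~\ref{main-thm-zig-zag}. You recognize this correctly, and your text is really a discussion of possible lines of attack rather than a proof; judged as such, your identification of the central difficulty---coherently gluing locally reconstructed data across the whole of $\pos$---is on target, and your suggestion (i) of restricting to maximal zigzag subposets and using Theorem~\ref{main-thm-zig-zag} on each piece is a sensible first step. It is also worth noting that the paper's actual proof in the zigzag case is of a very different flavor from what you sketch: it does not reconstruct maps combinatorially, but shows that the multirank map $R$ induces a surjective (hence, by a rank count via Gabriel's theorem, injective) homomorphism on the split Grothendieck group $K^{sp}(\Q)$, so it would be misleading to describe your sketch as ``extending the reconstruction strategy behind Theorem~\ref{main-thm-zig-zag}''.

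There is, however, a concrete error in your opening combinatorial claim. You assert that for a linearization $L_B$ and slices $(S_1,S_2)$, $\mathrm{multirank}_{L_B}(S_1,S_2)$ counts equivalence classes of $\bigsqcup_{x\in S_1}B_x$ under the relation generated by ``$b\sim b'$ if $f_{yx}(b)=f_{yx'}(b')$ for some $y\in S_2$''. This is false whenever $|S_2|\ge 2$. For example take $S_1=\{x\}$, $S_2=\{y_1,y_2\}$ with $x<y_1$ and $x<y_2$, $B_x=\{a,b,c\}$, $B_{y_1}=\{p,q\}$, $B_{y_2}=\{r,s\}$, and set $f_{y_1x}(a)=f_{y_1x}(b)=p$, $f_{y_1x}(c)=q$, $f_{y_2x}(a)=r$, $f_{y_2x}(b)=f_{y_2x}(c)=s$. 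The block matrix $\T_{L_B,\s}$ has linearly independent columns, so the multirank is $3$, yet your generated relation identifies $a\sim b$ (via $y_1$) and $b\sim c$ (via $y_2$), giving a single equivalence class. More generally, the rank of a $0$--$1$ matrix whose columns have one $1$ per block is not determined by any such coarse equivalence relation (distinct columns can still be linearly dependent), so the ``forward-trajectory collision'' picture you want to reconstruct is not actually what the multirank records. Since the rest of your plan rests on this interpretation to extract $f_{yx}$ up to base-change, the proposal as written does not get off the ground even for reconstructing a single map, independently of the (correctly identified) gluing obstacle.
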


Our Theorem \ref{main-thm-zig-zag} proves this conjecture when $\pos$ is a zigzag poset. 
\end{remark}

\section{The Jordan filtered rank invariant}\label{J-filt-rk-inv-sec}
Throughout this section, we assume that $\pos=\ZZ^d$ or $\RR^d$. Let $\s=(S_1, \ldots, S_n)$ be a tuple of $n$ pairwise disjoint slices of $\pos$. We begin with the following simple but very useful lemma.

\begin{lemma} \label{main-lemma-functoriality} Let $f: M \to N$ be a homomorphism of $\pos$-modules. Then
\begin{equation}\label{funct-eqn}
\restr{f}{\s}\left( \Ima \left( \T^i_{M,\s} \right) \right) \subseteq \Ima\left(\T^i_{N,\s} \right),
\end{equation}
for all integers $i \in \ZZ_{\geq 0}$. 
\end{lemma}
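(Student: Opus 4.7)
The plan is to reduce the statement to the single observation that $\restr{f}{\s}$ intertwines the two nilpotent operators, that is,
\begin{equation}
\restr{f}{\s} \circ \T_{M,\s} = \T_{N,\s} \circ \restr{f}{\s}.
\end{equation}
Once this is established, the desired inclusion for every $i \geq 0$ is an immediate consequence, because a straightforward induction on $i$ gives $\restr{f}{\s} \circ \T^i_{M,\s} = \T^i_{N,\s} \circ \restr{f}{\s}$, and hence any element of the form $\T^i_{M,\s}(w)$ is mapped by $\restr{f}{\s}$ to $\T^i_{N,\s}(\restr{f}{\s}(w)) \in \Ima(\T^i_{N,\s})$.

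To establish the intertwining relation, I would unwind the definitions. The morphism $f: M \to N$ is a natural transformation, so for every pair $x \leq y$ in $\pos$ the naturality square $f_y \circ M_{yx} = N_{yx} \circ f_x$ commutes. Viewing $\T_{M,\s}$ and $\T_{N,\s}$ as the block matrices described in \eqref{nil-op-as-block-matrix}, whose non-zero $(y,x)$-blocks are precisely the structure morphisms $M_{yx}$ and $N_{yx}$ for $x < y$ with $(x,y) \in S_i \times S_{i+1}$, the naturality squares assemble block-by-block into the identity $\restr{f}{\s} \circ \T_{M,\s} = \T_{N,\s} \circ \restr{f}{\s}$. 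Equivalently, one may note that $\restr{f}{\s}$ is a homomorphism of $\Lambda_\s$-modules, so in particular it commutes with multiplication by the element $u_\s \in \Lambda_\s$, which is exactly what the intertwining relation asserts.

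There is no substantive obstacle here: the content of the lemma is really the functoriality of the assignment $\M \mapsto (\restr{\M}{\s}, \T_{\M, \s})$ from $\pos$-modules to $K[t]/(t^n)$-modules via the pullback along $\varphi_\s$. The only care needed is to make the bookkeeping between the block-matrix description of $\T_{M,\s}$ and the naturality of $f$ explicit, after which the inductive step and the conclusion about images are automatic.
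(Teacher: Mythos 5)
Your proof is correct and essentially matches the paper's: the paper's argument is the $\Lambda_\s$-linearity observation you give as the "equivalent" phrasing, namely that $f$ commutes with multiplication by $u_\s^i$, so $f(u_\s^i\cdot v)=u_\s^i\cdot f(v)\in\Ima(\T^i_{N,\s})$. Your additional block-matrix/naturality-square unwinding is a harmless elaboration of the same fact.
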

\begin{proof} By construction, for any $\pos$-module $L$, we have that
$$
\Ima \left( \T^i_{L,\s} \right)=\{u^i_\s \cdot v \mid v \in \restr{L}{\s} \}.
$$
As a homomorphism of $\pos$-modules, $f$ is $\Lambda_\s$-linear, and thus $f(u^i_\s \cdot v)=u^i_\s \cdot f(v) \in \Ima\left(\T^i_{N,\s} \right)$ for any $v \in \restr{M}{\s}$. The proof of the claim now follows.
\end{proof}

Now, let $\M_\s$ be the $\pos$-module defined by
\begin{equation}
\left( \M_\s \right)_x:=\restr{\M[x]}{\s}=\bigoplus_{z \in \s}\M_{x+z} \text{~and~}\left( \M_\s \right)_{y,x}:=\bigoplus_{z \in \s} \M_{y+z, x+z}, \forall x \leq y.
\end{equation}

For each $i \in \NN$, equation $(\ref{funct-eqn})$ yields a functor $\M \to \Ima \left( \T^i_{M,\s} \right)$, from the category of pointwise finite-dimensional $\pos$-modules to $\vect_K$. We are going to use this functoriality property to construct a weakly decreasing filtration of $\M_\s$.

\begin{definition}[\textbf{Jordan modules}]\label{defn-ith-module-at-s}
For a $\pos$-module $\M$ and $i \in \NN$, we define a new $\pos$-module $\M^i_\s$ by
\begin{equation}
\left( \M^i_{\s} \right)_x:=\Ima(\T^i_{\M[x], \s}) \leq \restr{M[x]}{\s},  \forall x \in \pos,
\end{equation}
and
\begin{equation}
\left( \M^i_{\s} \right)_{y,x}:=\restr{\sh^{y-x}_{\M[x]}}{\Ima \left( \T^i_{\M[x],\s} \right)}:  \Ima \left( \T^i_{\M[x],\s} \right) \to \Ima\left(\T^i_{\M[y],\s} \right), \forall x \leq y.
\end{equation}
\end{definition}
We also refer to $\M^i_\s$ as the \textbf{degree $i$ Jordan module} associated to $\M$ and $\s$.

\bigbreak
\noindent
It is immediate to see that
\begin{equation}\label{filt-Jordan-modules}
0=\M^n_{\s} \subseteq \M^{n-1}_{\s} \subseteq \ldots \subseteq \M^1_{\s} \subseteq \M^0_{\s}=\M_\s
\end{equation}

We are now ready to define the Jordan filtered rank invariant of $\M$ at $\s$.

\begin{definition}[\textbf{The Jordan filtered rank  invariant at $\s$}] Let $\M$ be a $\pos$-module and $i \in \NN$. We define the \emph{degree $i$ rank invariant of $\M$ at $\s$} to be the rank invariant of $\M^i_{\s}$
\begin{equation}
\begin{aligned}
\brk^i_{\M, \s}:& \pos\times \pos \to \NN \cup \{\infty\}\\
& (x,y) \to
\begin{cases}
\rk \left((\M^i_\s)_{y,x} \right) &\text{~if~} x \leq y\\
\infty&\text{~otherwise~}
\end{cases}
\end{aligned}
\end{equation} 

The \emph{Jordan filtered ($\J$-filtered) rank invariant of $\M$ at $\s$} is defined as
\begin{equation}
\rkf(\M):=\left( \brk^i_{\M, \s} \right)_{0 \leq i \leq n-1}.
\end{equation}
\end{definition}

\begin{rmk} 
\begin{enumerate}[(a)]

\item The choice of our terminology for $\rkf(\M)$ is justified by $(\ref{filt-Jordan-modules})$ which yields
$$
\mathbf{0}=\brk^n_{\M, \s}\leq \brk^{n-1}_{\M, \s} \leq \ldots \leq \brk^1_{\M, \s} \leq \brk^0_{\M, \s}=\brk_{\M_\s}.
$$

\bigskip
\item The degree zero rank invariant can be easily expressed in terms of the ranks of the structural maps of the module $\M$. Specifically, we have that
$$
\begin{aligned}
\brk^0_{\M, \s}:& \pos\times \pos \to \NN\cup \{\infty\}\\
& (x,y) \to
\begin{cases}
\rk \left( \restr{\M[x]}{\s} \to \restr{\M[y]}{\s}  \right) & \text{~if~} x \leq y\\
\infty & \text{~otherwise~}
\end{cases}
\end{aligned}
$$
\noindent
and thus
\begin{equation}\label{rk-zero-formula}
\brk^0_{\M, \s}(x,y)=\sum_{z \in \s} \rk \left( M_{y+z,x+z} \right),
\end{equation}
for all $x\leq y$.

\bigskip
\item When $i=1$, $\brk^1_{\M, \s}(x,x)$ is the rank of the operator $\T_{\M[x], \s}$ which, as a block matrix, has all block entries zero except the entries on the subdiagonal; see $(\ref{nil-op-as-block-matrix})$. Thus we can write
$$
\brk^1_{\M, \s}(x,x)=\sum_{l=1}^{n-1} \rk\left( T_{\M[x], \s}(l) \right).
$$

\bigskip
\item For any $i \geq 0$, we have that
$$
\brk^i_{\M,\s}(x, x)= \rk(\T^i_{\M[x], \s}), \forall x \in \pos.
$$ 
In particular, $\brk^i_{\M,\s}(\mathbf{0}, \mathbf{0})= \rk(\T^i_{\M, \s})$.
\end{enumerate}
\end{rmk}

Before we move on, let us explain the relationship between $\rkf$ and the Jordan type $\jt_\s$.

\begin{lemma}\label{relation-Jt-Filtered-rk-inv}
For any $x \in \pos$, the Jordan type of the shift $\M[x]$ can be expressed in terms of the filtered rank invariant of $\M$ at $\s$ as follows
\begin{equation}
\jt_\s(\M[x])=\left( \brk^{i+1}_{\M, \s}(x,x)+\brk^{i-1}_{\M, \s}(x,x)-2\brk^i_{\M, \s}(x,x)\right)_{i\in [n]}.
\end{equation}
\end{lemma}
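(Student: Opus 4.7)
The plan is to reduce the lemma to a standard inversion formula for Jordan types of nilpotent operators, applied to $T := \T_{\M[x],\s}$. The key input is the final remark preceding the lemma, which gives
$$
\brk^i_{\M,\s}(x,x)=\rk(\T^i_{\M[x],\s})=\rk(T^i), \qquad i \geq 0,
$$
together with the conventions $\brk^0_{\M,\s}(x,x)=\dim \restr{\M[x]}{\s}$ and $\brk^k_{\M,\s}(x,x)=0$ for all $k \geq n$ (since $u_\s^n=0$ forces $T^n=0$).

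First I would write $\jt_\s(\M[x])=(a_1,\ldots,a_n)$ where $a_j$ is the number of $j \times j$ Jordan blocks of $T$, and use the elementary fact that for a single nilpotent Jordan block $J_j(0)$ of size $j$ one has $\rk(J_j(0)^k) = \max(j-k,0)$. Summing over the Jordan decomposition yields the formula
\begin{equation}
\rk(T^k)=\sum_{j=1}^n a_j \max(j-k,0), \qquad k \geq 0.
\end{equation}

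Next, I would form the discrete second difference and compute, for $i \in [n]$,
\begin{equation}
\rk(T^{i-1})-2\rk(T^i)+\rk(T^{i+1})=\sum_{j=1}^n a_j \bigl[\max(j-i+1,0)-2\max(j-i,0)+\max(j-i-1,0)\bigr].
\end{equation}
A direct case analysis on the sign of $j-i$ shows that the bracket equals $1$ when $j=i$ and $0$ otherwise, so the sum collapses to $a_i$. Substituting the identification $\brk^k_{\M,\s}(x,x)=\rk(T^k)$ gives exactly the claimed formula. The boundary cases $i=1$ and $i=n$ are handled automatically by the conventions above (in particular $\brk^{n+1}_{\M,\s}(x,x)=0$ since $T^n=0$).

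There is no real obstacle here; the statement is an application of a classical inversion formula, and the only mild subtlety is recognizing that the appearance of $\brk^{n+1}_{\M,\s}(x,x)$ in the boundary case $i=n$ is harmless because the operator $T$ is nilpotent of index at most $n$. The bulk of the conceptual work was already done in setting up the correspondence between $\brk^i_{\M,\s}(x,x)$ and $\rk(T^i)$ in the preceding remark.
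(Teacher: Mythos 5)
Your proof is correct and follows the same approach as the paper: both reduce to the classical identity expressing the number of $i\times i$ Jordan blocks of a nilpotent operator $T$ as $\rk(T^{i+1})+\rk(T^{i-1})-2\rk(T^i)$, combined with the identification $\brk^k_{\M,\s}(x,x)=\rk(\T^k_{\M[x],\s})$. The only difference is that the paper quotes this inversion formula as a general fact, whereas you derive it from $\rk(J_j(0)^k)=\max(j-k,0)$ and a second-difference computation.
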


\begin{proof} The formula follows at once from the general fact that expresses the number of Jordan blocks of size $i\times i$ of a nilpotent operator $T$ as $\rk(T^{i+1})+\rk(T^{i-1})-2\rk(T^i)$.
\end{proof}

\subsection{$\rkf$ is strictly finer than the classical rank invariant}
Let $\Gscr=[0,\ell_1]\times \ldots \times [0,\ell_d]\subseteq \pos=\ZZ^d$ be a $d$-dimensional grid. Let us slice it according to the norms of its points:
$$
S_{i}:=\{x \in \Gscr \mid |x|=i-1\}, \forall i \in [n],
$$
where $n:=\ell_1+\ldots+\ell_d+1$, and set
$$
\s=(S_1, \ldots, S_n).
$$

\begin{prop}\label{J-rk-finer-rk}
Keep the same notation as above. Then the Jordan filtered rank invariant $\rkf$ is strictly finer than the classical rank invariant.
\begin{enumerate}
\item Let $M$ and $N$ be two $\Gscr$-modules (extended trivially to the whole $\ZZ^d$). Then
$$
\brk^0_{M, \s}=\brk^0_{N, \s} \Longleftrightarrow \brk_M=\brk_N.
$$

\item Assume $d \geq 2$. There are (non-isomorphic) $\Gscr$-modules $X$ and $Y$ such that
$$
\brk^1_{X,\s}(\mathbf{0}, \mathbf{0}) \neq \brk^1_{Y, \s}(\mathbf{0}, \mathbf{0})\text{~but~} \brk_X=\brk_Y.
$$
\end{enumerate}
\end{prop}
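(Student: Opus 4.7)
For Part (1), the direction $\brk_M=\brk_N\Longrightarrow\brk^0_{M,\s}=\brk^0_{N,\s}$ is immediate from formula $(\ref{rk-zero-formula})$. For the converse I would re-parametrize the sum by $a=x+z$ to obtain
\begin{equation*}
\brk^0_{M,\s}(x,\,x+w)=\sum_{a\in x+\Gscr}\rk(M_{a+w,a});
\end{equation*}
then, for every $c\in\Gscr$, the choice $x=c-\ell$ (with $\ell=(\ell_1,\ldots,\ell_d)$) collapses the effective summation set $(x+\Gscr)\cap\Gscr$ to the box $[\mathbf{0},c]$, so that the left-hand side computes the $d$-dimensional prefix sum $F_w(c):=\sum_{a\leq c}\rk(M_{a+w,a})$. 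Möbius inversion on the product poset $\Gscr$ then recovers each individual $\rk(M_{a+w,a})$ from the family $\{F_w(c)\}_c$, showing that $\brk^0_{M,\s}$ determines $\brk_M$.

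For Part (2), I would exhibit an explicit counterexample supported on a $[0,1]^2$ corner of $\Gscr$ (possible since $d\geq 2$; assume $\ell_1,\ell_2\geq 1$). Let $X$ and $Y$ both have $K^2$ at $\mathbf{0}$, $K$ at the two basis vectors $e_1$ and $e_2$, and $0$ elsewhere, with structure maps
\begin{equation*}
X_{e_1,\mathbf{0}}=(1,\,0),\quad X_{e_2,\mathbf{0}}=(0,\,1); \qquad Y_{e_1,\mathbf{0}}=(1,\,0),\quad Y_{e_2,\mathbf{0}}=(1,\,0).
\end{equation*}
All pointwise ranks $\rk(M_{b,a})$ visibly agree on these two modules, so $\brk_X=\brk_Y$. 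Non-isomorphism follows from a direct check: any morphism $\phi:X\to Y$ must satisfy the commuting-square conditions $\phi_{e_1}(1,\,0)=(1,\,0)\phi_{\mathbf{0}}$ and $\phi_{e_2}(0,\,1)=(1,\,0)\phi_{\mathbf{0}}$, which together force the first row of $\phi_{\mathbf{0}}$ to be both $(\phi_{e_1},\,0)$ and $(0,\,\phi_{e_2})$, hence zero; so $\phi_{\mathbf{0}}$ is not invertible. Finally, under the norm slicing, $\T_{M,\s}(1)$ sends $v\in K^2$ to $(v_1,v_2,0,\ldots)$ for $M=X$ and to $(v_1,v_1,0,\ldots)$ for $M=Y$, of ranks $2$ and $1$ respectively; all higher slice maps vanish since $\restr{M}{\s}(i)=0$ for $i\geq 3$. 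Therefore $\brk^1_{X,\s}(\mathbf{0},\mathbf{0})=2\neq 1=\brk^1_{Y,\s}(\mathbf{0},\mathbf{0})$.

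The main obstacle is constructing the Part (2) example: one must simultaneously keep the rank invariant fixed, be non-isomorphic, and change the block rank of $\T_{M,\s}$. The construction above resolves all three properties by concentrating the action in a single slice map $\T_{M,\s}(1)$, so that changing the structure map $M_{e_2,\mathbf{0}}$ from $(0,1)$ to $(1,0)$ has no effect on any individual $\rk(M_{b,a})$ but collapses the rank of the $2\times 2$ matrix $\T_{M,\s}(1)$. Part (1), in contrast, is a routine $d$-dimensional box-sum inversion, with only minor bookkeeping to handle the boundary where the extension-by-zero of $M$ kills terms outside $\Gscr$.
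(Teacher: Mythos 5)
Your proposal is correct and takes essentially the same approach as the paper. For Part (1), you package the inversion step as Möbius inversion on the box $[\mathbf{0},c]$ after shifting so that formula $(\ref{rk-zero-formula})$ becomes a clean prefix sum $F_w(c)$, whereas the paper's formula $(\ref{formula-proof-finer-rk-prop})$ isolates the top term $\brk_L(\mathbf{x},\mathbf{y})$ and inducts on $|\mathbf{y}|$ — these are the same inversion in different clothing; and for Part (2) your $(X,Y)$ is the paper's pair with the labels exchanged, yielding the identical conclusion.
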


\begin{proof} $(1)$ The implication $``\Longleftarrow"$ follows at once from $(\ref{rk-zero-formula})$. 

Now, let us assume that $\brk^0_{M, \s}=\brk^0_{N, \s}$, and let $\mathbf{x}, \mathbf{y} \in \Gscr$ be such that $\mathbf{x} \leq \mathbf{y}$. Note that for any $\Gscr$-module $L$, $(\ref{rk-zero-formula})$ yields
\begin{equation}\label{formula-proof-finer-rk-prop}
\begin{aligned}
\brk^0_{L, \s}(\mathbf{x}-(\ell_1, \ldots, \ell_d), & \mathbf{y}-(\ell_1, \ldots, \ell_d)) = \\ & \brk_L(\mathbf{x}, \mathbf{y})+\sum_{\mathbf{z}} \brk_L( \mathbf{x}-(\ell_1, \ldots, \ell_d)+\mathbf{z}, \mathbf{y}-(\ell_1, \ldots, \ell_d)+\mathbf{z}),
\end{aligned}
\end{equation}
where the sum is over all elements $\mathbf{z} \in \Gscr$ such that $\mathbf{z} \neq (\ell_1, \ldots, \ell_d)$ and $\ell_i-x_i \leq z_i \leq 2\ell_i-y_i$ for all $i \in [d]$.

Using induction of the norm of $\mathbf{y} \in \Gscr$ and $(\ref{formula-proof-finer-rk-prop})$, we get that $\brk_M(\mathbf{x}, \mathbf{y})=\brk_N(\mathbf{x}, \mathbf{y})$ for all $\mathbf{x} \leq \mathbf{y}$.

\bigskip
\noindent
$(2)$ Consider the zigzag poset 
$
\vcenter{\hbox{  
\begin{tikzpicture}[point/.style={shape=circle, fill=black, scale=.3pt,outer sep=3pt},>=latex]
   \node[point] (1) at (0,0) {};
   \node[point] (2) at (1,0) {};
   \node[point] (3) at (2,0) {};
   
   \path[->]
  (2) edge  (1)
  (2) edge (3);
\end{tikzpicture} 
}}
$
which we also view as an interval of $\Gscr$ by identifying the sink vertices with $(1,0, \ldots, 0)$ and $(0,1,0, \ldots, 0)$ and the source vertex with the origin. Let $X$ and $Y$ be the zigzag modules defined as
$$
X:~\vcenter{\hbox{  
\begin{tikzpicture}[point/.style={shape=circle, fill=black, scale=.3pt,outer sep=3pt},>=latex]
   \node[point, label={below:$K$}] (1) at (0,0) {};
   \node[point, label={below:$K^2$}] (2) at (1.25,0) {};
   \node[point, label={below:$K$}] (3) at (2.5,0) {};
   
   \path[->]
  (2) edge node[above] {$[1~0]$}  (1)
  (2) edge node[above] {$[1~0]$}  (3);
\end{tikzpicture} 
}}
\text{~and~}
Y:~\vcenter{\hbox{  
\begin{tikzpicture}[point/.style={shape=circle, fill=black, scale=.3pt,outer sep=3pt},>=latex]
   \node[point, label={below:$K$}] (1) at (0,0) {};
   \node[point, label={below:$K^2$}] (2) at (1.5,0) {};
   \node[point, label={below:$K$}] (3) at (3,0) {};
   
   \path[->]
  (2) edge node[above] {$[1~0]$}  (1)
  (2) edge node[above] {$[0~1]$}  (3);
\end{tikzpicture} 
}}
$$
It is then immediate to check that $\brk_X=\brk_Y$ and
$$
\brk^1_{X,\s}(\mathbf{0}, \mathbf{0})=1 \neq 2=\brk^1_{Y, \s}(\mathbf{0}, \mathbf{0}).
$$
\end{proof}

\section{Erosion-type and landscape stability for filtered rank invariants}
Throughout this section, we assume that $\pos=\ZZ^d$ or $\RR^d$. For a $\pos$-module $\M$ and $\varepsilon \in \pos$, the $\varepsilon$-shift of $\M$, denoted by $\M[\varepsilon]$, is the $\pos$-module defined by
$$
\left( \M[\varepsilon] \right)_x:=\M_{x+\varepsilon}, \forall x \in \pos, \text{~and~} \left( \M[\varepsilon] \right)_{yx}:=\M_{y+\varepsilon, x+\varepsilon}, \forall x \leq y.
$$
For $\varepsilon \in \pos$, we can define the $\varepsilon$-shift homomorphism 
$$
\sh^{\varepsilon}_{\M}:=\left(\M_{x+\varepsilon, x}:\M_x \to \M_{x+\varepsilon} \right)_{x \in \pos}:\M \to \M[\varepsilon].
$$ 
\noindent
For two modules $M$ and $N$,  an $\varepsilon$-interleaving between them is a pair of homomorphisms 
$$
\varphi: M \to N[\varepsilon] \qquad \text{and} \qquad \psi: N \to M[\varepsilon]
$$
such that 
$$
\psi[\varepsilon] \circ \varphi = \sh_M^{2\varepsilon}
\qquad \text{and} \qquad
\varphi[\varepsilon] \circ \psi =\sh_N^{2\varepsilon}.
$$

The interleaving distance between $M$ and $N$ is
$$
d_I(M,N) = \inf \{ \varepsilon \ge 0 \mid \text{there exists an $\overrightarrow{\varepsilon}$-interleaving between $M$ and $N$} \}.
$$
Here, $\overrightarrow{\varepsilon}$ denotes $(\varepsilon, \ldots, \varepsilon)$ for any $\varepsilon \in \RR_{\geq 0}$ or $\ZZ_{\geq 0}$.

\begin{rmk}
In \cite{Mike-Lesnick-2015}, Lesnick showed that the interleaving distance is the most discriminating and stable pseudo-metric for multiparameter persistence modules. It is arguably the best at capturing the notion of algebraic nearness.  

However, despite these theoretical advantages, computing the interleaving distance in practice is highly impractical, as it is known that for a given $\delta>0$, deciding whether $d_I \leq \delta$ is NP-hard when $d \geq 2$ (see \cite{interleaving-np-2020}). 
\end{rmk}

\subsection{The erosion distance and persistence landscapes} Let $F, G: \pos^{opp}\times \pos \to \left( \NN\cup \{\infty\} \right)^{opp}$ be two functors, i.e., $F$ and $G$ are functions such that
\begin{equation}\label{eqn-erosion-fun}
F(x,y) \leq F(x', y') \text{~and~} G(x,y) \leq G(x', y'),
\end{equation}
for all $x \leq x' \leq y' \leq y$. 

\begin{example} For a $\pos$-module $\M$, its rank invariant
$$
\begin{aligned}
\brk_\M:& \pos\times \pos \to \NN\cup \{\infty\}\\
& (x,y) \to
\begin{cases}
\rk\left( \M_{yx} \right)  & \text{~if~} x \leq y\\
\infty \hspace{50pt} & \text{~otherwise~}
\end{cases}
\end{aligned}
$$
can be easily seen to satisfy $(\ref{eqn-erosion-fun})$.
\end{example}

We now come to a notion of distance introduced in the context of MPH by Patel \cite{Patel-erosion-2018}, Puuska \cite{Puuska-erosion-2020}, and Kim-M{\'e}moli \cite{Kim-Memoli-erosion-2021}. 

\begin{definition} The \emph{erosion distance} between $F$ and $G$ is
\begin{equation}
d_E(F,G):=\inf \left\lbrace \varepsilon \geq 0 \;\middle|\;
\begin{array}{l}
F(x-\overrightarrow{\varepsilon}, y+\overrightarrow{\varepsilon}) \leq G(x,y),\text{~and}\\
G(x-\overrightarrow{\varepsilon}, y+\overrightarrow{\varepsilon}) \leq F(x,y), \forall x,y \in \pos
\end{array}
\right\rbrace.
\end{equation}
\end{definition}
\begin{rmk}
In the $1$-parameter case, the erosion distance is a lower bound for the bottleneck distance. Furthermore,  it is believed it can be computed effectively (see \cite[Example 1]{Adams-et-al-2022}). For a general discussion on the computational complexity of $d_E$, see \cite{Kim-Memoli-erosion-2021}.  
\end{rmk}

In \cite{Peter-landscapes-2015}, Bubenik introduced and studied the persistence landscape of a persistence module, providing stable vectorizations of persistence diagrams.  Later on, Vipond \cite{Vipond-persistence-multi-2020} extended this notion to multiparameter persistence modules.

\begin{definition}
Given a functor $F: \pos^{opp}\times \pos \to \left( \NN\cup \{\infty\} \right)^{opp}$, its \emph{persistence landscape} is the function $\lambda_F \in L^{\infty}(\NN \times \pos) $ defined by
\begin{equation}
\lambda_F(k,x):=\sup \{  \varepsilon >0 \mid F(x-\mathbf{h}, x+\mathbf{h})\geq k, \forall \norm{\mathbf{h}}_{\infty} \leq \varepsilon \}.
\end{equation}
\noindent
If $F$ is of the form $\brk_\M$ for a $\pos$-module $\M$, we write $\lambda_\M$ for $\lambda_{\brk_{\M}}$.
\end{definition}

\begin{theorem}\label{thm-general-landscape-erosion}
\begin{enumerate}
\item Let $F,G: \pos^{opp}\times \pos \to \left( \NN\cup \{\infty\} \right)^{opp}$ be two functors with associated landscapes $\lambda_F, \lambda_G \in L^{\infty}(\NN \times \pos)$. Then
$$
\norm{ \lambda_F-\lambda_G }_{\infty} \leq d_E(F,G).
$$

\item Assume $F=\brk_M$ and $G=\brk_N$ where $M$ and $N$ are two $\pos$-modules. Then
$$
d_E(\brk_M,\brk_N) \leq d_I(M,N),
$$
where $d_I$ is the interleaving distance. Consequently, 
$$
\norm{ \lambda_M-\lambda_N }_{\infty} \leq d_E(\brk_M,\brk_N) \leq d_I(M,N).
$$
\end{enumerate}
\end{theorem}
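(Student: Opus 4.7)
The statement consists of two separate assertions; the plan is to treat them in turn, with the concluding chain of inequalities following by concatenation.

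\emph{Part (1): the landscape bound.} The plan is to fix $(k,x) \in \NN\times \pos$ and show $|\lambda_F(k,x) - \lambda_G(k,x)| \le d_E(F,G)$ pointwise, then take the supremum. By symmetry it suffices to prove $\lambda_G(k,x) \geq \lambda_F(k,x) - d_E(F,G)$. I would pick an arbitrary $\varepsilon > d_E(F,G)$ and some $\delta$ with $0 < \delta < \lambda_F(k,x) - \varepsilon$ (if the right-hand side is non-positive the claim is vacuous). For any $\mathbf{h}$ with $\|\mathbf{h}\|_\infty \leq \delta$, a triangle-inequality estimate gives $\|\mathbf{h} + \overrightarrow{\varepsilon}\|_\infty \leq \delta + \varepsilon < \lambda_F(k,x)$, so the definition of $\lambda_F$ yields $F(x - \mathbf{h} - \overrightarrow{\varepsilon},\; x + \mathbf{h} + \overrightarrow{\varepsilon}) \geq k$. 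The definition of $d_E$, applied with $(x',y')=(x-\mathbf{h},\, x+\mathbf{h})$, then gives $F(x - \mathbf{h} - \overrightarrow{\varepsilon},\; x + \mathbf{h} + \overrightarrow{\varepsilon}) \le G(x - \mathbf{h},\, x + \mathbf{h})$, so $G(x - \mathbf{h},\, x + \mathbf{h}) \geq k$ for every such $\mathbf{h}$, and hence $\lambda_G(k,x) \geq \delta$. Letting $\delta \uparrow \lambda_F(k,x) - \varepsilon$ and then $\varepsilon \downarrow d_E(F,G)$ finishes this part.

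\emph{Part (2): erosion stability of the rank invariant.} The plan is the standard factorization-through-an-interleaving argument. Fix $\varepsilon > d_I(M,N)$ and let $(\varphi,\psi)$ be a corresponding $\overrightarrow{\varepsilon}$-interleaving. For $x \le y$, evaluating the interleaving identity $\psi[\overrightarrow{\varepsilon}] \circ \varphi = \sh_M^{2\overrightarrow{\varepsilon}}$ at $x-\overrightarrow{\varepsilon}$ yields $\psi_x \circ \varphi_{x-\overrightarrow{\varepsilon}} = M_{x+\overrightarrow{\varepsilon},\, x-\overrightarrow{\varepsilon}}$, while naturality of $\psi$ at the pair $x \le y$ gives $\psi_y \circ N_{yx} = M_{y+\overrightarrow{\varepsilon},\, x+\overrightarrow{\varepsilon}} \circ \psi_x$. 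Composing these two identities exhibits the structure map
\[
M_{y+\overrightarrow{\varepsilon},\, x-\overrightarrow{\varepsilon}} \;=\; \psi_y \circ N_{yx} \circ \varphi_{x-\overrightarrow{\varepsilon}},
\]
so its rank is at most $\rk(N_{yx})$, i.e., $\brk_M(x-\overrightarrow{\varepsilon},\, y+\overrightarrow{\varepsilon}) \leq \brk_N(x,y)$. Swapping the roles of $M$ and $N$ yields the symmetric inequality, so $d_E(\brk_M, \brk_N) \leq \varepsilon$; taking $\varepsilon \downarrow d_I(M,N)$ completes Part (2), and the final chain $\norm{\lambda_M - \lambda_N}_\infty \leq d_E(\brk_M, \brk_N) \leq d_I(M,N)$ is obtained by applying Part (1) to $F = \brk_M,\ G = \brk_N$.

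\emph{Main obstacle.} There is no genuinely deep step here: both halves are diagram-chasing once the bookkeeping is set up. The likeliest source of error is quantifier order in Part (1) — because $\lambda_F(k,x)$ is defined as a supremum that need not be attained, one must work with $\delta$ strictly below $\lambda_F(k,x) - \varepsilon$ rather than plug in the boundary radius directly. In Part (2), the only care required is tracking shifts in the interleaving identity so that the composition really lands on $M_{y+\overrightarrow{\varepsilon},\, x-\overrightarrow{\varepsilon}}$ and not on some off-by-$\overrightarrow{\varepsilon}$ relative of it.
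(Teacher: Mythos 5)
Your proof is correct, and it fills in arguments that the paper does not actually spell out: the paper treats this theorem as a citation (Part (1) to Fersztand's Proposition 3.11, Part (2) to Vipond's Theorem 30) rather than proving it. Your Part (1) argument — working with a radius $\delta$ strictly below $\lambda_F(k,x)-\varepsilon$, using downward-closedness of the set defining the landscape to get $F(x-\mathbf{h}-\overrightarrow{\varepsilon}, x+\mathbf{h}+\overrightarrow{\varepsilon})\geq k$, then applying the erosion inequality at $(x-\mathbf{h}, x+\mathbf{h})$ — is the standard proof and handles the sup/inf quantifier issues cleanly, exactly as you flag in the ``main obstacle'' note. Your Part (2) factorization $M_{y+\overrightarrow{\varepsilon},\,x-\overrightarrow{\varepsilon}} = \psi_y \circ N_{yx} \circ \varphi_{x-\overrightarrow{\varepsilon}}$ (obtained by splitting $M_{y+\overrightarrow{\varepsilon},x-\overrightarrow{\varepsilon}} = M_{y+\overrightarrow{\varepsilon},x+\overrightarrow{\varepsilon}}\circ M_{x+\overrightarrow{\varepsilon},x-\overrightarrow{\varepsilon}}$, using the interleaving identity at $x-\overrightarrow{\varepsilon}$, and then naturality of $\psi$) is also the standard route; it is, in fact, the same factorization idea the paper later re-proves in a more general form in Lemma~\ref{main-lemma-stability}. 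One small completeness remark: when checking the erosion condition you only treated pairs with $x\le y$; for $x\not\le y$, $\brk_N(x,y)=\infty$ by convention, so the inequality is vacuous — worth a sentence. Otherwise everything is in order.
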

The first part of Theorem \ref{thm-general-landscape-erosion} was proved in \cite[Proposition 3.11]{Marc-Fer-2024} and the second part in \cite[Theorem 30]{Vipond-persistence-multi-2020}.

\subsection{A template for achieving erosion-type stability} In this section, we describe a template, inspired by \cite{Marc-Fer-2024}, for establishing stability results for invariants constructed within a specific framework. Let $\A$ be a class of $\pos$-modules of interest, closed to isomorphisms and shifts, and let $\F$ be a functor from $\A$ to $\vect_K$, the category of finite-dimensional $K$-vector spaces. 

With the functor $\F$ in place, given a module $\M \in \A$, we consider the $\pos$-module $\M^\F$ defined by
\begin{equation}
\M^\F_x:=\F(\M[x]), \text{~for all~} x \in \pos, \text{~and~} \M^\F_{y,x}:=\F\left( \sh^{y-x}_{\M[x]}\right), \text{~for all~} x \leq y.
\end{equation}

\begin{definition}[\textbf{The $\F$-rank invariant}] We define the \emph{$\F$-rank invariant of $\M$}, denoted by $\brk_{\M,\F}$, to be the rank invariant of the $\pos$-module $\M^\F$, \emph{i.e.},

\begin{equation}
\begin{aligned}
\brk_{\M, \F}:& \pos\times \pos \to \ZZ_{\geq 0}\cup \{\infty\}\\
& (x,y) \to
\begin{cases}
\rk\left( \M^\F_{y,x} \right)  & \text{~if~} x \leq y\\
\infty & \text{~otherwise.~}
\end{cases}
\end{aligned}
\end{equation}
\end{definition}
\noindent
As a rank invariant, $\brk_{\M, \F}$ can also be viewed as a functor from $\pos^{opp} \times \pos$ to $\left( \ZZ_{\geq 0}\cup \{\infty\} \right)^{opp}$.

The following lemma, although straightforward, plays a crucial role in proving erosion-type stability results for $\F$-rank invariants. 

\begin{lemma}(compare to \cite[Lemma 3.6]{Marc-Fer-2024}) \label{main-lemma-stability} Let $M$ and $N$ be $\pos$-modules from $\A$, and let $x\leq x'\leq y'\leq y$ be elements of $\pos$. Assume that there are homomorphims 
$$
f:M \to N[x'-x] \text{~and~} g:N \to M[y-y']
$$
such that
\begin{equation}\label{translates-equation-key-lemma}
g[x'-x] \circ f =\sh_M^{(y-x)-(y'-x')}.
\end{equation}
Then
\begin{equation}
\brk_{M, \F}(x,y) \leq \brk_{N, \F}(x',y').
\end{equation}
\end{lemma}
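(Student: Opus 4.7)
The plan is to factor the shift map $\sh^{y-x}_{M[x]}: M[x] \to M[y]$ through the shift map $\sh^{y'-x'}_{N[x']}: N[x'] \to N[y']$ using appropriately shifted versions of $f$ and $g$, and then apply the functor $\F$ to transfer this factorization to the $\F$-rank invariants. Since rank decreases under composition (the rank of a composition is at most the rank of any factor), this will yield the desired inequality.

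More concretely, first I would set $a := x'-x$, $b := y-y'$, and $c := y'-x'$, and note the identity $a+b+c = y-x$. The hypothesis $(\ref{translates-equation-key-lemma})$ then reads $g[a]\circ f = \sh_M^{a+b}$ as a morphism $M \to M[a+b]$. Next, I would assemble the three maps
\[
M[x] \xrightarrow{\,f[x]\,} N[x'] \xrightarrow{\,\sh^{y'-x'}_{N[x']}\,} N[y'] \xrightarrow{\,g[y']\,} M[y],
\]
noting that $f[x]$ has codomain $N[x-x+a]=N[x']$ and $g[y']$ has codomain $M[y'+b]=M[y]$, so the domains and codomains match up correctly.

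The heart of the argument is to verify the identity
\begin{equation}
g[y']\circ \sh^{y'-x'}_{N[x']} \circ f[x] \;=\; \sh^{y-x}_{M[x]}
\label{eqn-proposal-factor}
\end{equation}
of morphisms $M[x]\to M[y]$. To establish $(\ref{eqn-proposal-factor})$ it suffices, by shifting everything back by $-x$, to prove the unshifted version $g[a+c]\circ \sh^{c}_{N[a]}\circ f = \sh^{a+b+c}_M$. Evaluating at a point $z$, the left-hand side becomes $g_{z+a+c}\circ N_{z+a+c,\,z+a}\circ f_z$. Naturality of $f: M \to N[a]$ applied to $z \leq z+c$ gives $N_{z+a+c,\,z+a}\circ f_z = f_{z+c}\circ M_{z+c,\,z}$, and then the hypothesis $g_{w+a}\circ f_w = M_{w+a+b,\,w}$ with $w = z+c$ collapses the remainder to $M_{z+a+b+c,\,z+c}\circ M_{z+c,\,z} = M_{z+y-x,\,z}$, which is exactly $\sh^{y-x}_M$ at $z$. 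This bookkeeping with shifts is the only technical step, and I expect it to be the main (mild) obstacle: keeping track of which shift lands where.

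Finally, applying the functor $\F$ to $(\ref{eqn-proposal-factor})$ yields
\[
\F(g[y'])\;\circ\; \F(\sh^{y'-x'}_{N[x']})\;\circ\; \F(f[x]) \;=\; \F(\sh^{y-x}_{M[x]}),
\]
which, by the definitions of $M^\F$ and $N^\F$, is precisely $\F(g[y'])\circ N^\F_{y',x'}\circ \F(f[x]) = M^\F_{y,x}$. Taking ranks and using $\rk(ABC)\leq \rk(B)$ gives $\rk(M^\F_{y,x}) \leq \rk(N^\F_{y',x'})$, which translates to $\brk_{M,\F}(x,y)\leq \brk_{N,\F}(x',y')$ as required.
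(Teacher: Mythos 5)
Your proof is correct and follows essentially the same route as the paper's: both establish the key factorization $g[y']\circ \sh^{y'-x'}_{N[x']}\circ f[x]=\sh^{y-x}_{M[x]}$, then apply $\F$ and take ranks. The only (cosmetic) difference is that you invoke naturality of $f$ where the paper invokes naturality of $g$ to chase the same commutative square; your $a,b,c$ bookkeeping is clean and the shift-by-$-x$ reduction is valid.
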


\begin{rmk}
The motivation behind $(\ref{translates-equation-key-lemma})$ arises from the notion of interleaving homomorphisms.  Indeed, if $x, x', y$, and $y'$ satisfy $x'-x=y-y'=\overrightarrow{\varepsilon}$ with $\varepsilon \geq 0$, then $(\ref{translates-equation-key-lemma})$ becomes 
$$
g[\overrightarrow{\varepsilon}]\circ f=\sh_M^{2 \overrightarrow{\varepsilon}},
$$
which is one the equations that occurs in the definition of a pair of interleaving homomorphisms.
\end{rmk}

\begin{proof} We begin by claiming that
\begin{equation} \label{2nd-eq-key-lemma-stability}
g[y']\circ \sh_{N[x']}^{y'-x'} \circ f[x]=\sh_{M[x]}^{y-x}.
\end{equation}
Indeed, for any $z \in \pos$, the $z$-component of the composition of the maps on the left-hand side of $(\ref{2nd-eq-key-lemma-stability})$ is
$$
g_{y'+z}\circ N_{y'+z,x'+z}\circ f_{x+z} \in \Hom_K(M_{x+z}, M_{y+z}).
$$
Next, re-writing $(\ref{translates-equation-key-lemma})$ at the level of linear maps yields
$$
g_{x'-x+u}\circ f_u=M_{y-y'+x'-x+u, u}, \forall u \in \pos.
$$
Letting $u=x+z$ in the equation above and using the fact that $g$ is a homomorphism, we obtain the commutative diagram
\[
\begin{tikzpicture}[>=latex, node distance=3cm]
    \node (V1) {$M_{x+z}$};
    \node (V2) [right of=V1] {$N_{x'+z}$};
    \node (V3) [right of=V2] {$M_{y-y'+x'+z}$};
    \node (W1) [below of=V2] {$N_{y'+z}$};
    \node (W2) [right of=W1] {$M_{y+z}$};

    \draw[->] (V1) -- (V2) node[midway, above] {$f_{x+z}$};
    \draw[->] (V2) -- (V3) node[midway, above] {$g_{x'+z}$};
    \draw[->] (V2) -- (W1) node[midway, left] {$N_{y'+z, x'+z}$};
    \draw[->] (V3) -- (W2) node[midway, right] {$M_{y+z, y-y'+z}$};
    \draw[->] (W1) -- (W2) node[midway, below] {$g_{y'+z}$};

    \draw[->, bend left=28] (V1) to node[midway, above] {$M_{y-y'+x'+z, x+z}$} (V3);
    
    \node at ($(V2)!0.5!(W2)$) {\(\circlearrowleft\)};
\end{tikzpicture}
\]
\noindent
Composing the linear maps along the two paths from $M_{x+z}$ to $M_{y+z}$, we obtain
$$
g_{y'+z}\circ N_{y'+z,x'+z}\circ f_{x+z} =M_{y+z, y-y'+z} \circ M_{y-y'+x'+z, x+z}=M_{y+z, x+z},
$$
and this proves our claim.

Finally, applying the functor $\F$ to $(\ref{2nd-eq-key-lemma-stability})$, we obtain the commutative diagram 
\[
\begin{tikzpicture}[node distance=2cm, auto]
    \node (1) at (0, 0) {$\F(M[x])$};
    \node (2) at (4.5, 0) {$\F(M[y])$};
    \node (3) at (0, -2) {$\F(N[x'])$};
    \node (4) at (4.5, -2) {$\F(N[y'])$};
    
    \draw[->] (1) -- (2) node[midway, above] {$\F(\sh^{y-x}_{M[x]})$};
    \draw[->] (1) -- (3) node[midway, left] {$\F(f[x])$};
    \draw[->] (4) -- (2) node[midway, right] {$\F(g[y'])$};
    \draw[->] (3) -- (4) node[midway, below] {$\F(\sh^{y'-x'}_{N[x']})$};

    \node at ($(1)!0.5!(4)$) {\(\circlearrowleft\)};
\end{tikzpicture}
\]
This implies that $\rk(\F(\sh^{y-x}_{M[x]})) \leq \rk(\F(\sh^{y'-x'}_{N[x']}))$, which is what we needed to prove.
\end{proof}
We are now ready to prove the following result, our template for achieving stability.

\begin{theorem}(compare to \cite[Theorem 3.7]{Marc-Fer-2024})\label{thm-stability-template} For two modules $M,N \in \A$, 
$$
d_E(\brk_{M^\F}, \brk_{N^\F}) \leq d_I(M,N).
$$
\end{theorem}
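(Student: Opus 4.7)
The plan is to deduce the theorem directly from Lemma \ref{main-lemma-stability} by matching its hypothesis against the interleaving data. First, I will fix an arbitrary $\varepsilon \geq 0$ for which there exists an $\overrightarrow{\varepsilon}$-interleaving between $M$ and $N$, given by homomorphisms $\varphi: M \to N[\overrightarrow{\varepsilon}]$ and $\psi: N \to M[\overrightarrow{\varepsilon}]$ satisfying
\begin{equation*}
\psi[\overrightarrow{\varepsilon}] \circ \varphi = \sh_M^{2\overrightarrow{\varepsilon}} \qquad \text{and} \qquad \varphi[\overrightarrow{\varepsilon}] \circ \psi = \sh_N^{2\overrightarrow{\varepsilon}}.
\end{equation*}
Since $\A$ is closed under shifts, all the shifted modules of $M$ and $N$ appearing below remain in $\A$, and $\F$ may be applied throughout.

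Next, for arbitrary $a, b \in \pos$, I will invoke Lemma \ref{main-lemma-stability} with the instantiation
\begin{equation*}
a - \overrightarrow{\varepsilon} \;\leq\; a \;\leq\; b \;\leq\; b + \overrightarrow{\varepsilon}
\end{equation*}
playing the roles of the lemma's variables $x, x', y', y$, and with $f := \varphi$, $g := \psi$. The differences $x' - x$ and $y - y'$ both equal $\overrightarrow{\varepsilon}$, so $f$ and $g$ have the correct sources and targets, and
\begin{equation*}
g[x' - x] \circ f \;=\; \psi[\overrightarrow{\varepsilon}] \circ \varphi \;=\; \sh_M^{2\overrightarrow{\varepsilon}} \;=\; \sh_M^{(y - x) - (y' - x')}
\end{equation*}
is precisely the first interleaving identity. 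The lemma therefore delivers
\begin{equation*}
\brk_{M, \F}(a - \overrightarrow{\varepsilon}, b + \overrightarrow{\varepsilon}) \;\leq\; \brk_{N, \F}(a, b).
\end{equation*}
Swapping the roles of $M$ and $N$ (so that $f := \psi$, $g := \varphi$) and invoking the second interleaving identity yields the symmetric bound $\brk_{N, \F}(a - \overrightarrow{\varepsilon}, b + \overrightarrow{\varepsilon}) \leq \brk_{M, \F}(a, b)$.

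Finally, since these two inequalities hold for every $a, b \in \pos$, the definition of the erosion distance gives $d_E(\brk_{M^\F}, \brk_{N^\F}) \leq \varepsilon$. Taking the infimum over all admissible $\varepsilon$ yields the desired bound $d_E(\brk_{M^\F}, \brk_{N^\F}) \leq d_I(M, N)$. I do not anticipate any genuine obstacle: the only non-formal step is the arithmetic identity $(y - x) - (y' - x') = 2\overrightarrow{\varepsilon}$ that makes the lemma's hypothesis coincide with the interleaving equation, after which the argument is a direct translation.
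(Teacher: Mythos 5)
Your proposal is correct and follows essentially the same argument as the paper: fix an $\overrightarrow{\varepsilon}$-interleaving, instantiate Lemma \ref{main-lemma-stability} with the chain $a-\overrightarrow{\varepsilon} \leq a \leq b \leq b+\overrightarrow{\varepsilon}$ (and its $M\!\leftrightarrow\!N$ swap) so that the lemma's hypothesis becomes exactly one of the interleaving identities, deduce the two erosion inequalities, and take the infimum over $\varepsilon$. The paper's version is more terse but performs the same substitution; your explicit verification of $(y-x)-(y'-x')=2\overrightarrow{\varepsilon}$ and the remark that $\A$ is closed under shifts are harmless elaborations of steps the paper leaves implicit.
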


\begin{proof} Let $\varepsilon \geq 0$ be such that there exist interleaving homomorphisms $f:M \to N[\overrightarrow{\varepsilon}] \text{~and~} g:N \to M[\overrightarrow{\varepsilon}]$ satisfying
\begin{equation}\label{eqn-thm-template}
g[\overrightarrow{\varepsilon}]\circ f=\sh_M^{2 \overrightarrow{\varepsilon}}
\text{~and~}
f[\overrightarrow{\varepsilon}]\circ g=\sh_N^{2 \overrightarrow{\varepsilon}}.
\end{equation}

\bigskip
Now let $x, y \in \pos$ be such that $x \leq y$. Since $x-\overrightarrow{\varepsilon} \leq x\leq y\leq y+\overrightarrow{\varepsilon}$, we get from $(\ref{eqn-thm-template})$ and Lemma \ref{main-lemma-stability} that
$$
\brk_{M, \F}(x-\overrightarrow{\varepsilon}, y+\overrightarrow{\varepsilon}) \leq \brk_{N, \F}(x,y) \text{~and~} \brk_{N, \F}(x-\overrightarrow{\varepsilon}, y+\overrightarrow{\varepsilon}) \leq \brk_{M, \F}(x,y).
$$
The proof now follows.
\end{proof}

Finally, we are in a position to prove our stability theorem.

\begin{proof}[\textbf{Proof of Theorem \ref{main-thm-Jt-stability}}]
For each $i \in \{0,\ldots,  n-1 \}$, applying Theorem \ref{thm-stability-template} to the functor 
$$L \to \Ima \left( \T^i_{L,\s}\right),
$$
we obtain
\begin{equation}\label{ineq-thm-2-proof}
d_E(\brk^i_{\M, \s}, \brk^i_{N,\s}) \leq d_I(M,N),
\end{equation}
and this yields the first inequality of the theorem.

Finally, as a consequence of Theorem \ref{thm-general-landscape-erosion} and $(\ref{ineq-thm-2-proof})$, we get that
$$
\max_{i \in \{0, \ldots, n-1
\}} \norm{ \lambda_{M^i_\s}-\lambda_{N^i_\s} }_{\infty} \leq d_{E}(M,N)_\s.
$$
\end{proof}

\subsection*{Acknowledgment} 
The first author would like to thank Thomas Br{\"u}stle, Peter Bubenik, Justin Desrochers, Emerson Escolar, Erik Hanson, and Woojin Kim for stimulating discussions related to parts of this work. C. Chindris is supported by Simons Foundation grant $\# 711639$.


\begin{thebibliography}{XATZ22}\label{biblio-sec}

\bibitem[ABH25]{AmiBruHan-2025}
C.~Amiot, T.~Brüstle, and E.~J. Hanson, \emph{Invariants of persistence
  modules defined by order-embeddings}, 2025.

\bibitem[AK23]{Alonso-Kerber-pers-sets-2023}
\'A.~J. Alonso and M.~Kerber, \emph{Decomposition of zero-dimensional
  persistence modules via rooted subsets}, 39th {I}nternational {S}ymposium on
  {C}omputational {G}eometry, LIPIcs. Leibniz Int. Proc. Inform., vol. 258,
  Schloss Dagstuhl. Leibniz-Zent. Inform., Wadern, 2023, pp.~Art. No. 7, 16.
  \MR{4603992}

\bibitem[BBH25]{BlaBruHan-2025}
B.~Blanchette, T.~Brüstle, and E.~J. Hanson, \emph{Exact structures for
  persistence modules}, 2025.

\bibitem[BBK20]{interleaving-np-2020}
H\aa vard~Bakke Bjerkevik, Magnus~Bakke Botnan, and Michael Kerber,
  \emph{Computing the interleaving distance is {NP}-hard}, Found. Comput. Math.
  \textbf{20} (2020), no.~5, 1237--1271. \MR{4156997}

\bibitem[BP12]{BenPev}
D.~Benson and J.~Pevtsova, \emph{A realization theorem for modules of constant
  {J}ordan type and vector bundles}, Trans. Amer. Math. Soc. \textbf{364}
  (2012), no.~12, 6459--6478. \MR{2958943}

\bibitem[Bub15]{Peter-landscapes-2015}
P.~Bubenik, \emph{Statistical topological data analysis using persistence
  landscapes}, J. Mach. Learn. Res. \textbf{16} (2015), 77--102. \MR{3317230}

\bibitem[CCL14]{CarChiLin-2014}
A.~T. {Carroll}, C.~{Chindris}, and Z.~{Lin}, \emph{{Quiver representations of
  constant Jordan type and vector bundles}}, arXiv e-prints (2014),
  arXiv:1402.2568.

\bibitem[CFP08]{CarFriPev1}
J.~F. Carlson, E.~M. Friedlander, and J.~Pevtsova, \emph{Modules of constant
  {J}ordan type}, J. Reine Angew. Math. \textbf{614} (2008), 191--234.
  \MR{2376286 (2008j:20135)}

\bibitem[CFP12]{CarFriPev2}
\bysame, \emph{Representations of elementary abelian {$p$}-groups and bundles
  on {G}rassmannians}, Adv. Math. \textbf{229} (2012), no.~5, 2985--3051.
  \MR{2889153}

\bibitem[CFS11]{CarFRiSus}
J.~F. Carlson, E.~M. Friedlander, and A.~Suslin, \emph{Modules for {$\Bbb
  Z/p\times\Bbb Z/p$}}, Comment. Math. Helv. \textbf{86} (2011), no.~3,
  609--657. \MR{2803855 (2012d:20017)}

\bibitem[Fer24]{Marc-Fer-2024}
M.~Fersztand, \emph{Harder-{N}arasimhan filtrations of persistence modules:
  metric stability}, 2024.

\bibitem[KM21]{Kim-Memoli-erosion-2021}
W.~Kim and F.~M\'emoli, \emph{Spatiotemporal persistent homology for dynamic
  metric spaces}, Discrete Comput. Geom. \textbf{66} (2021), no.~3, 831--875.
  \MR{4310597}

\bibitem[KM24]{KimMoore2024}
W.~Kim and S.~Moore, \emph{Bigraded {B}etti numbers and generalized persistence
  diagrams}, J. Appl. Comput. Topol. \textbf{8} (2024), no.~3, 727--760.
  \MR{4799028}

\bibitem[LCB23]{LoiCarrBlum2023}
D.~Loiseaux, M.~Carri{\`e}re, and A.~Blumberg, \emph{A framework for fast and
  stable representations of multiparameter persistent homology decompositions},
  Thirty-seventh Conference on Neural Information Processing Systems, 2023.

\bibitem[Les15]{Mike-Lesnick-2015}
M.~Lesnick, \emph{The theory of the interleaving distance on multidimensional
  persistence modules}, Found. Comput. Math. \textbf{15} (2015), no.~3,
  613--650. \MR{3348168}

\bibitem[Miy67]{Miyata-67}
T.~Miyata, \emph{Note on direct summands of modules}, J. Math. Kyoto Univ.
  \textbf{7} (1967), 65--69. \MR{214585}

\bibitem[Pat18]{Patel-erosion-2018}
A.~Patel, \emph{Generalized persistence diagrams}, J. Appl. Comput. Topol.
  \textbf{1} (2018), no.~3-4, 397--419. \MR{3975559}

\bibitem[Puu20]{Puuska-erosion-2020}
V.~Puuska, \emph{Erosion distance for generalized persistence modules},
  Homology Homotopy Appl. \textbf{22} (2020), no.~1, 233--254. \MR{4040293}

\bibitem[Tho19]{Thomas-thesis-2019}
A.~L. Thomas, \emph{Invariants and metrics for multiparameter persistent
  homology}, Ph.d. dissertation, Duke University, 2019.

\bibitem[Vip20]{Vipond-persistence-multi-2020}
O.~Vipond, \emph{Multiparameter persistence landscapes}, J. Mach. Learn. Res.
  \textbf{21} (2020), Paper No. 61, 38. \MR{4095340}

\bibitem[XATZ22]{Adams-et-al-2022}
L.~Xian, H.~Adams, C.~M. Topaz, and L.~Ziegelmeier, \emph{Capturing dynamics of
  time-varying data via topology}, Found. Data Sci. \textbf{4} (2022), no.~1,
  1--36. \MR{4622886}

\end{thebibliography}

\end{document}